\documentclass[arxiv=true]{agn_article}

\usepackage{agn_all}

\addbibresource{bib_agn.bib}
\addbibresource{bib_main.bib}

\renewcommand{\dd}{\displaystyle}

\newcommand{\omegaSVK}{\omega_{\textrm{SVK}}}

\newcommand{\Wiso}{W_{\textrm{\rm iso}}}
\newcommand{\Wvol}{W_{\textrm{\rm vol}}}

\newcommand{\GLpz}{\GL^+(2)}

\newcommand{\quoteref}[1]{\enquote{\emph{#1}}}

\begin{document}

\title{A non-ellipticity result, or\\ the impossible taming of the logarithmic strain measure} 
\author{%
	Robert J.\ Martin\thanks{%
		Corresponding author: Robert J.\ Martin, \ \ Lehrstuhl f\"{u}r Nichtlineare Analysis und Modellierung, Fakult\"{a}t f\"{u}r Mathematik,
		Universit\"{a}t Duisburg-Essen, Thea-Leymann Str. 9, 45127 Essen, Germany; email: robert.martin@uni-due.de%
	}
	\quad and\quad
	Ionel-Dumitrel Ghiba\thanks{%
		Ionel-Dumitrel Ghiba, \ \ Alexandru Ioan Cuza University of Ia\c si, Department of Mathematics, Blvd.~Carol I, no.~11, 700506 Ia\c si, Romania;
		Octav Mayer Institute of Mathematics of the Romanian Academy, Ia\c si Branch, 700505 Ia\c si and Lehrstuhl f\"{u}r Nichtlineare Analysis und Modellierung, Fakult\"{a}t f\"{u}r Mathematik, Universit\"{a}t Duisburg-Essen, Thea-Leymann Str. 9, 45127 Essen, Germany; email: dumitrel.ghiba@uni-due.de, dumitrel.ghiba@uaic.ro%
	}
	\quad and\quad
	Patrizio Neff\thanks{%
		Patrizio Neff, \ \ Head of Lehrstuhl f\"{u}r Nichtlineare Analysis und Modellierung, Fakult\"{a}t f\"{u}r Mathematik, Universit\"{a}t Duisburg-Essen, Thea-Leymann Str. 9, 45127 Essen, Germany; email: patrizio.neff@uni-due.de%
	}
}
\maketitle
\begin{abstract}
The logarithmic strain measures $\norm{\log U}^2$, where $\log U$ is the principal matrix logarithm of the stretch tensor $U=\sqrt{F^TF}$ corresponding to the deformation gradient $F$ and $\norm{\,.\,}$ denotes the Frobenius matrix norm, arises naturally via the geodesic distance of $F$ to the special orthogonal group $\SO(n)$. This purely geometric characterization of this strain measure suggests that a viable constitutive law of nonlinear elasticity may be derived from an elastic energy potential which depends solely on this intrinsic property of the deformation, i.e.\ that an energy function $W\col\GLpn\to\R$ of the form
\begin{equation}
	W(F)=\Psi(\norm{\log U}^2) \label{eq:abstractFunctionForm}\tag{1}
\end{equation}
with a suitable function $\Psi\col[0,\infty)\to\R$ should be used to describe finite elastic deformations. %

However, while such energy functions enjoy a number of favorable properties, we show that it is not possible to find a strictly monotone function $\Psi$ such that $W$ of the form \eqref{eq:abstractFunctionForm} is Legendre-Hadamard elliptic.

Similarly, we consider the related isochoric strain measure $\norm{\dev_n\log U}^2$, where $\dev_n \log U$ is the deviatoric part of $\log U$. Although a polyconvex energy function in terms of this strain measure has recently been constructed in the planar case $n=2$, we show that for $n\geq3$, no strictly monotone function $\Psi\col[0,\infty)\to\R$ exists such that $F\mapsto \Psi(\norm{\dev_n\log U}^2)$ is polyconvex or even rank-one convex.
Moreover, a volumetric-isochorically decoupled energy of the form $F\mapsto \Psi(\norm{\dev_n\log U}^2) + \Wvol(\det F)$ cannot be rank-one convex for any function $\Wvol\col(0,\infty)\to\R$ if $\Psi$ is strictly monotone.
\end{abstract}

\tableofcontents

\section{Introduction}
\subsection{Strain measures in nonlinear elasticity}
\label{section:introduction}

In nonlinear hyperelasticity, the behaviour of an elastic material is determined by an \emph{elastic energy potential}\footnote{For the notation employed here and throughout, see Section \ref{notationsect}.}
\begin{equation}
	W\col\GLpn\to\R\,,\quad F\mapsto W(F)
\end{equation}
depending on the deformation gradient $F=\grad\varphi$ of a deformation $\varphi$. A large variety of representation formulae for certain classes of such functions is available in the literature. In particular, it is well known that any objective and isotropic function $W$ can be expressed in terms of the singular values of the argument, i.e.\ for any function $W\col\GLpn\to\R$ with
\[
	W(F)={W}(Q_1\, F\, Q_2) \qquad\text{ for all }\; Q_1,Q_2,R\in\SO(n)\,,
\]
there exists a unique symmetric function $g\col\mathbb{R}_+^n\to \mathbb{R}$ such that $W(F)=g(\lambda_1,\dotsc,\lambda_n)$ for all $F\in\GLpn$ with singular values $\lambda_1,\dotsc,\lambda_n$. Furthermore, if $f\col[0,\infty)\to\R$ is injective, then $W$ can also be written as
\begin{align}
W(F)=\widetilde{g}(f(\lambda_1),f(\lambda_2),...,f(\lambda_n))
\end{align}
with a symmetric function $\widetilde{g}\col\mathbb{R}\to \mathbb{R}$. In particular, this representation is possible for functions $f=f_{(m)}$ of the form
\begin{align}
f_{(m)}(x)=\left\{
\begin{array}{ll}
\dd\frac{1}{2\.m}(x^{2m}-\id),& m\neq 0\,, \vspace{1.2mm}\\
\log x,& m=0\,,
\end{array}
\right.
\end{align}
which correspond to the commonly used \cite{boehlkeBertram2002} \emph{strain tensors} of \emph{Seth-Hill type} \cite{seth1961generalized,hill1970}
\begin{align}
E_{(m)}=\left\{
\begin{array}{ll}
\dd\frac{1}{2\, m}(U^{2\, m}-\id),& m\neq 0, \vspace{1.2mm}\\
\log U,& m=0\,,
\end{array}
\right.
\end{align}
where $\log U$ is the principal matrix logarithm of the stretch tensor $U=\sqrt{F^TF}$. In general, a (material) strain tensor is commonly defined as a \quoteref{uniquely invertible isotropic second order tensor function} of the right Cauchy-Green deformation tensor $C=F^TF$ \cite[p.~268]{truesdell60}.\footnote{Additional properties such as monotonicity are sometimes required of strain tensors, see for example \cite[p.\ 230]{Hill68} or \cite[p.~118]{Ogden83}.} %
Due to the invertibility of strain tensor mappings, any energy function $W$ can also be written in the form
\[
	W(F) = \widetilde{W}(E(F))
\]
for any strain tensor $E$.

In contrast to a strain tensor, a \emph{strain measure} is an arbitrary mapping $\omega\col\GLp(n)\to\R$ such that $\omega(F)=0$ if and only if $F\in\SOn$. Examples of strain measures include the squared Frobenius norms of the Seth-Hill strain tensors
\begin{equation}
	\omega_{(m)} = \norm{E_{(m)}}^2=\sum_{i=1}^n f_{(m)}^2(\lambda_i)\,.
\end{equation} 
From this perspective, a strain measure indicates how much a deformation gradient $F\in\GLp(n)$ differs from a pure rotation, which suggests that an appropriate strain measure should be defined by introducing a \emph{distance} function on $\GLpn$ and using the distance of $F$ to the space of pure rotations $\SO(n)$, or a function thereof, as the strain measure.

The particular choice of a suitable distance on $\GLpn$, however, is not immediately obvious. Grioli \cite{Grioli40,agn_neff2014grioli} showed that employing the Euclidean distance on $\GLp(n)$ yields the strain measure
\begin{equation}
	\omega_{\rm Grioli}\colonequals{\rm dist}_{\rm Euclid }^2 (F, \SO(n ))=\norm{U-\id}^2=\norm{E_{(1)}}^2=\omega_{(1)}\,.
\end{equation}
However, this strain measure suffers from a number of serious shortcomings due to the fact that the Euclidean distance is not an intrinsic distance measure on $\GLp(n)$ \cite{agn_neff2014riemannian,agn_neff2015geometry,agn_martin2014minimal}. On the other hand, strain measures involving the logarithmic strain arise from choosing the \emph{geodesic distance} on $\GLpn$ endowed with a natural Riemanian metric structure. More precisely \cite{agn_neff2015geometry},
\begin{align}\label{gsm}
\norm{\log U}^2&={\rm dist}_{{\rm geod}}^2(F, \SO(n)),\notag\\
\norm{\dev_n\log U}^2&={\rm dist}^2_{{\rm geod,{\rm SL}(n)}}\left( \frac{F}{(\det F)^{1/n}}, \SO(n)\right),\\
[\tr(\log U)]^2&=[\log \det U]^2={\rm dist}^2_{{\rm geod,\mathbb{R}_+\cdot \id}}\left((\det F)^{1/n}\cdot \id,
\id\right),\notag
\end{align}
where ${\rm dist}_{{\rm geod}}$, ${\rm dist}_{{\rm geod,\mathbb{R}_+\cdot \id}}$ and ${\rm dist}_{{\rm geod,{\rm SL}(n)}}$ are the canonical left invariant geodesic distances on the Lie-groups $\GLn$, ${\rm SL}(n)\colonequals\{X\in \GL(n)\;|\det{X}=1\}$ and $\mathbb{R}_+\cdot\id$, respectively \cite{agn_neff2015geometry,agn_lankeit2014minimization}.

Energy functions and constitutive laws expressed in terms of these logarithmic strain measures have been a subject of interest in nonlinear elasticity theory for a long time, going back to investigations by the geologist G.\,F.~Becker \cite{becker1893,agn_neff2014rediscovering} first published in 1893 and the famous introduction of the quadratic Hencky strain energy
\[
	\WH\col\GLpn\to\R\,,\quad \WH(F) = \mu\,\norm{\dev_n\log U}^2+\frac{\kappa}{2}\,[\tr(\log U)]^2 = \mu\,\norm{\log U}^2+\frac{\Lambda}{2}\,[\tr(\log U)]^2
\]
by Heinrich Hencky in 1929 \cite{Hencky1928,Hencky1929}. Hencky later considered more general elastic energy functions based on the logarithmic strain as well; for example, in a 1931 article in the Journal of Rheology \cite{hencky1931}, he suggested an energy function of the form
\begin{equation}
\label{eq:hencky1931}
	W_{1931}(F) = \mu\,\norm{\dev_3\,\log U}^2 + h(\det U)\,,
\end{equation}
were the volumetric part $h\col(0,\infty)\to\R$ of the energy is a function to be determined by experiments.

Important contributions are also due to H.~Richter, who considered the three logarithmic invariants $K_1=\tr(\log U)$, $K_2^2= \tr((\dev_3\log U)^2)$ and $\widetilde{K}_3=\tr((\dev_3\log U)^3)$ in a 1949 article \cite{richter1949verzerrung}.

More recently, a set of isotropic invariants similar to those used by Richter were introduced by Criscione et al.\ \cite{criscione2000invariant,criscione2002direct,wilber2005baker,idjeri2011identification}, who considered the \emph{invariant basis}
\begin{equation}\label{gsm2}
	\left\{ \begin{alignedat}{2}
		&K_1=\tr(\log U)=\log \det U &&\text{\enquote{the amount-of-dilatation}}\\[.63em]
		&K_2=\norm{\dev_3\log U} &&\text{\enquote{the magnitude-of-distortion}}\\
		&K_3=3\sqrt{6}\, \det \left(\dd\frac{\dev_3 \log U}{\norm{\dev_3\log U}}\right) \qquad\quad &&\text{\enquote{the mode-of-distortion}}
	\end{alignedat} \right.
\end{equation}
for the natural strain $\log U$ and showed that any isotropic energy $W$ on $\GLp(3)$ can be represented in the form
\begin{equation}\label{eq:criscioneEnergy}
	W(F)=W_{\rm {C}risc}(K_1,K_2,K_3)\,.
\end{equation}
Similarly, Lurie \cite{lurie2012nonlinear} used the invariants $K_1$, $K_2$ and $\widehat{K}_3=\arcsin (K_3)$.

Although energy functions expressed in terms of logarithmic strain measures often exhibit some interesting and desirable properties \cite{Anand79,agn_neff2015exponentiatedI,agn_neff2015exponentiatedII}, they also provide a number of mathematical challenges. One of the greatest difficulties is posed by the lack of appropriate \emph{convexity properties}.

\subsection{Convexity properties of energy functions}

Among the many constitutive properties for hyperelasticity discussed in the literature, some of the most important ones are the conditions of \emph{rank-one convexity} and \emph{polyconvexity} \cite{ball1977constitutive} of the energy function $W$.

\begin{definition}
	\label{definition:rankOneConvexity}
	A function $W\colon\GLpn\to\mathbb{R}$ is called \emph{rank-one convex} if for all $F\in\GLpn$, all $\xi,\eta\in\mathbb{R}^n$ and any interval $I\subset\R$ such that $\det (F+t\cdot \xi\otimes \eta)>0$ for all $t\in I$, the mapping
	\[
		I\to\R\,,\quad t\mapsto W(F+t\cdot \xi\otimes \eta)
	\]
	is convex.
\end{definition}

\begin{remark}
	A sufficiently regular function $W$ is rank-one convex on $\GLp(n)$ if and only if it satisfies the \emph{Legendre-Hadamard ellipticity} condition
	\begin{align}
	\label{def:lhellipt}
		D^2_F W(F)(\xi\otimes\eta,\xi\otimes\eta) \geq 0 \qquad\text{for all }\; \xi,
		\eta\in\mathbb{R}^n\,,\; F\in \GLp(n)\,.
	\end{align}
	If strict inequality holds in \eqref{def:lhellipt} for all $F\in\GLpn$ and all $\xi,\eta\in\R^n\setminus\{0\}$, then $W$ is called \emph{strongly} Legendre-Hadamard elliptic.
\end{remark}

\begin{definition}\label{definition:polyconvexityGLSL}
	~
	\begin{itemize}
		\item[i)]
			A function $W\colon\Rnn\to\R\cup\{\infty\}$ is called polyconvex if there exists a convex function $P\colon\mathbb{R}^m\to\mathbb{R}\cup\{\infty\}$ such that
			\begin{equation}
				W(F) = P(\mathbb{M}(F)) \qquad\text{for all }\;F\in\mathbb{R}^{n\times n}\,,
			\end{equation}
			where $\mathbb{M}(F)\in\R^m$ denotes the vector of all minors of $F$.
		\item[ii)]
			A function $W\colon\GLpn\to\R$ is called polyconvex if the function
			\begin{align}
				\widetilde{W}\colon\Rnn\to\R\cup\{\infty\}\,,\quad \widetilde{W}(F)=
				\begin{cases}
					W(F) &:\; F\in\GLpn\\
					\infty &:\; F\notin\GLpn
				\end{cases}
			\end{align}
			is polyconvex according to i).
	\end{itemize}
\end{definition}

\begin{remark}
\label{remark:polyImpliesRankOne}
	If $W\colon\GLpn\to\R$ is polyconvex, then $W$ is rank-one convex \cite{Dacorogna08}.
\end{remark}

Although, unlike many other constitutive assumptions, the condition of polyconvexity is not necessitated by physical or mechanical considerations, it is one of the most important tools to ensure the existence of energy minimizers under appropriate boundary conditions.

Rank-one convexity (or LH-ellipticity), on the other hand, is generally not sufficient to ensure the existence of minimizers. However, it is not only a necessary condition for polyconvexity \cite{Dacorogna08,wilber2002,ndanou2014criterion}, but directly motivated by physical reasoning as well: for example, ellipticity of a constitutive law ensures finite wave propagation speed \cite{eremeyev2007constitutive,zubov2011,sawyersRivlin78} and prevents discontinuities of the strain along plane interfaces under homogeneous Cauchy stress \cite{agn_neff2016injectivity,agn_mihai2016hyperelastic,agn_mihai2017hyperelastic}.

However, constructing a viable energy function in terms of logarithmic strain measures which satisfies either of these convexity conditions turns out to be quite challenging.

In a 2004 article, Sendova and Walton \cite{sendova2005strong} gave a number of necessary conditions for the rank-one convexity of energies of the form \eqref{eq:criscioneEnergy}. In the planar case $n=2$, it was recently shown by Neff et al.\ \cite{agn_neff2015exponentiatedII,agn_ghiba2015exponentiated} that the \emph{exponentiated Hencky energy}
\begin{equation}\label{eq:expHenckyIntroduction}
	W_{_{\rm eH}}\col\GLp(2)\to\R\,,\quad W_{_{\rm eH}}(F) = \frac{\mu}{k}\,e^{k\,\norm{\dev_2\log U}^2}+\frac{\kappa}{2\,\widehat{k}}\,e^{\widehat{k}\,[(\log \det U)]^2}\,,
\end{equation}
where $k\geq\frac14$ and $\widehat{k}\geq\frac18$ are additional dimensionless parameters, is polyconvex (and thus quasiconvex and rank-one convex). In the three-dimensional case, however, the exponentiated Hencky energy is \emph{not} rank-one convex.

As we will show in this article, the search for a rank-one convex energy resembling \eqref{eq:expHenckyIntroduction} in the three-dimensional case was, unfortunately, destined to fail from the beginning: For $n\geq3$, there exists no strictly monotone function of $\norm{\log U}^2$ or $\norm{\dev_n\log U}^2$ which is rank-one convex on $\GLpn$; further, an energy with a volumetric-isochoric split whose isochoric part is a strictly monotone function of $\norm{\dev_n\log U}^2$ cannot be rank-one convex. These main results are presented in Sections \ref{log}, \ref{devlog} and \ref{volisosect}, respectively. %

\subsection{Related work}
Bertram et al.\ \cite{bertram2007rank} considered quadratic energies of the form
\begin{align}
W(F)=g(\lambda_1,\lambda_2,...,\lambda_n)=\frac{1}{2}\sum_{i=1}^n f^2(\lambda_i)+\beta\, \sum_{1\leq i<j\leq n}f(\lambda_i)f(\lambda_j)\,,
\end{align}
with $f\col\mathbb{R}_+\to \mathbb{R}$ such that $f(1)=0,$ $f'(1)=0$, $f'\neq 0$ and $\beta\in \mathbb{R}$. The functions $f$ are known as \emph{generalized strain measures} \cite{boehlkeBertram2002}. The authors prove that if the Hessian
of $g$ at $(1,1,1)$ is positive definite, $\beta\neq 0$, and $f$ is strictly monotone, and/or if $f^2$ is a Seth-Hill strain measure $\omega_{(m)}$ corresponding to any $m\in \mathbb{R}$, then the energy $W$ is not rank-one convex. This extends previous results \cite{Raoult1986,Neff_Diss00,Bruhns01} for $f=f_{(1)}$ and $f=f_{(0)}$.
From these observations, the authors conclude that a necessary condition for an energy to be rank-one convex is that the stress-strain relationship in the considered generalized strain measures must be physically non-linear.

\subsection{Notation}\label{notationsect}
Throughout this article, $F=\grad\varphi$ denotes the deformation gradient corresponding to a deformation $\varphi$, $C=U^TU$ is the right Cauchy-Green deformation tensor, $B=FF^T$ is the Finger tensor, $U=\sqrt{F^TF}$ is the right stretch tensor and $V=\sqrt{FF^T}$ is the left stretch tensor corresponding to $F$.

Furthermore, we denote the standard Euclidean scalar product on $\R^{n\times n}$ by $\langle {X},{Y}\rangle=\tr{(X Y^T)}$, the Frobenius tensor norm is given by $\norm{{X}}^2=\langle {X},{X}\rangle$ and the identity tensor on $\R^{n\times n}$ is denoted by $\id$; note that $\tr{(X)}=\langle {X},{\id}\rangle$. We adopt the usual abbreviations of Lie-group theory, i.e.\ $\GL(n)\colonequals\{X\in\R^{n\times n}\;|\det{X}\neq 0\}$ denotes the general linear group, $\OO(n)\colonequals\{X\in \GL(n)\;|\;X^TX=\id\}$ is the orthogonal group, $\SO(n)\colonequals\{X\in \GL(n,\R)\;| X^T X=\id,\;\det{X}=1\}$ is the special orthogonal group and $\GLp(n)\colonequals\{X\in\R^{n\times n}\;|\det{X}>0\}$ is the group of invertible matrices with positive determinant. The superscript $^T$ is used to denote transposition, and $\Cof A = (\det A)A^{-T}$ is the cofactor of $A\in \GLp(n)$. For all vectors $\xi,\eta\in\R^n$ we denote the dyadic product by $(\xi\otimes\eta)_{ij}\colonequals\xi_i\,\eta_j$. By \enquote{$\cdot$} we denote the multiplication with scalars or the multiplication of matrices. The Fr\'echet derivative of a function $W$ at $F$ applied to the tensor-valued increment $H$ is denoted by $D_F[W(F)]. H$. Similarly, $D_F^2[W(F)]. (H_1,H_2)$ is the bilinear form induced by the second Fr\'echet derivative of the function $W$ at $F$ applied to $(H_1,H_2)$.

We also identify the first derivative $D_F W$ with the gradient, writing $D_F W.H = \iprod{D_F W, H}$ for $F\in\GLpn$ and $H\in\Rnn$, and employ the chain rules
\begin{align}
D_F(( \Phi\circ W)(F)).H=D_F(\Phi(W(F))).H= \Phi'(W(F))\, D_FW(F).H\,,\notag\\
D_F((W\circ G)(F)).H=D_F(W(G(F))).H= \langle DW(G(F)), D_FG(F).H\rangle\notag
\end{align}
for $W\col\mathbb{R}^{3\times3}\to \mathbb{R}$, $G\col\mathbb{R}^{3\times3}\to \mathbb{R}^{3\times3}$ and $\Phi\col\mathbb{R}\to \mathbb{R}$. For instance,
\begin{align}\label{fdsvk}
D_F(\norm{F^TF-\id}^2). H&=2\,\langle F^TF-\id, D_F(F^TF-\id).H\rangle=2\,\langle F^TF-\id, D_F(F^TF-\id).H\rangle\notag\\
&=4\,\langle F^TF-\id, F^TH+H^TF\rangle=4\,\langle FF^TF-F, H\rangle\,,
\end{align}
since
\begin{align}
D_F(\norm{F}). H=\frac{1}{\norm{F}}\,\langle F, H\rangle,\qquad D_F(\norm{F}^2).H=2\, \norm{F}\, \frac{1}{\norm{F}}\,\langle F, H\rangle=2\,\langle F, H\rangle \notag
\end{align}
for all $F\neq 0$.

\section{Rank-one convex energies in terms of strain measures}
We consider the problem of rank-one convexity in terms of different strain measures $\omega$. More specifically, we are interested in whether or not it is \emph{possible} for a given $\omega$ to find a non-trivial (i.e.\ non-constant) function $\Psi$ such that $\Psi\circ\omega$ is rank-one convex. A basic necessary condition on $\omega$ for the existence of a \emph{strictly monotone} function $\Psi\col\R\to\R$ such that the mapping $F\mapsto\Psi(\omega(F))$ is rank-one convex is stated in the following lemma.

\begin{lemma}
\label{lemma:impossibility}
	Let $\omega\in {\rm C}^2(\GLpn; I)$ for an interval $I\subset\R$. If there exist $F\in\GLpn$ and $\xi,\eta\in\mathbb{R}^n\setminus \{0\}$ such that
	\begin{equation}\label{eq:omegaCondition}
		D\omega(F).(\xi\otimes \eta) = 0 \qquad\text{and}\qquad D^2\omega(F).(\xi\otimes \eta,\xi\otimes \eta) < 0\,,
	\end{equation}
	then there exists no strictly monotone function $\Psi\col I\to\R$ such that the mapping $F\mapsto W(F)\colonequals\Psi(\omega(F))$ is rank-one convex on $\GLpn$.
\end{lemma}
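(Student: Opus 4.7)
The plan is to argue by contradiction: suppose some strictly monotone $\Psi\col I\to\R$ makes $W\colonequals\Psi\circ\omega$ rank-one convex on $\GLpn$, and fix the triple $(F,\xi,\eta)$ supplied by the hypothesis. By Definition~\ref{definition:rankOneConvexity}, the scalar map
\[
	g(t)\colonequals W(F+t\,\xi\otimes\eta)=\Psi\bigl(\omega(F+t\,\xi\otimes\eta)\bigr)
\]
must be convex on the open interval $I_{F,\xi,\eta}\colonequals\{t\in\R\mid\det(F+t\,\xi\otimes\eta)>0\}$, which contains $t=0$ as an interior point. My strategy is to exhibit a strict local maximum of $g$ at $t=0$, in direct conflict with this convexity.

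First I would reduce the problem to the one-dimensional restriction of $\omega$. Setting $h(t)\colonequals\omega(F+t\,\xi\otimes\eta)$, the chain rule and the $C^2$-regularity of $\omega$ yield $h\in {\rm C}^2$ on a neighbourhood of $0$ with
\[
	h'(0)=D\omega(F).(\xi\otimes\eta)=0 \qquad\text{and}\qquad h''(0)=D^2\omega(F).(\xi\otimes\eta,\xi\otimes\eta)<0
\]
by the assumption \eqref{eq:omegaCondition}. A Taylor expansion then produces some $\delta>0$ with
\[
	h(t)=h(0)+\tfrac{1}{2}\,h''(0)\,t^2+o(t^2)<h(0) \qquad\text{for all }\; 0<|t|<\delta,
\]
so that $\omega$ attains a strict local maximum at $F$ along the rank-one direction $\xi\otimes\eta$.

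Finally I would transport this strict inequality through $\Psi$: strict monotonicity of $\Psi$ preserves the strict inequality $h(t)<h(0)$, so $g(t)=\Psi(h(t))<\Psi(h(0))=g(0)$ for every $0<|t|<\delta$. Thus $t=0$ is a strict interior local maximum of $g$, which contradicts the convexity of $g$ on the open interval $I_{F,\xi,\eta}$, since a convex function of a single real variable cannot attain a strict local maximum at an interior point of its domain. No serious obstacle arises in this argument; the Taylor expansion is elementary, and the role of strict monotonicity is precisely to carry the strict second-order inequality from $\omega$ over to $W=\Psi\circ\omega$ without imposing any differentiability on $\Psi$ itself.
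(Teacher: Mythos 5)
Your argument is correct and is essentially the paper's own proof: both restrict $\omega$ to the rank-one line $t\mapsto F+t\,\xi\otimes\eta$, use the first- and second-order conditions in \eqref{eq:omegaCondition} to produce a strict local maximum of this restriction at $t=0$, and transport it through $\Psi$ to contradict the convexity required by Definition \ref{definition:rankOneConvexity}. The only caveat --- shared with the paper's proof --- is that the step $h(t)<h(0)\Rightarrow\Psi(h(t))<\Psi(h(0))$ tacitly reads \enquote{strictly monotone} as \enquote{strictly increasing}, which is the physically relevant case the authors clearly intend.
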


\begin{proof}
	Let $F\in\GLpn$ and $\xi\otimes \eta\in\Rnn$ satisfy \eqref{eq:omegaCondition}. Then for $\eps>0$ sufficiently small, the mapping
	\[
		p\col (-\eps,\eps)\to I\,,\quad p(t) = \omega(F+t\cdot \xi\otimes \eta)
	\]
	has a strict maximum at $t=0$, since
	\[
		p'(0) = D\omega(F).(\xi\otimes \eta) = 0 \qquad\text{and}\qquad p''(0) = D^2\omega(F).(\xi\otimes \eta,\xi\otimes \eta) < 0\,.
	\]
	If $\Psi$ is strictly monotone on $I$, then the mapping
	\begin{equation}\label{eq:impossibilityLemmaNonConvexity}
		q\col (-\eps,\eps)\to \R\,,\quad q(t) = \Psi(p(t)) = W(F+t\cdot \xi\otimes \eta)
	\end{equation}has a strict maximum in $t=0$ as well. In particular, $q$ cannot be convex, which implies that $W$ is not rank-one convex (cf.\ Definition \eqref{definition:rankOneConvexity}).
\end{proof}
If $\Psi$ is twice differentiable on $I$ with $\Psi'(t)>0$ for all $t\in I$, then Lemma \ref{lemma:impossibility} also follows from the observation that
\begin{equation}
	D^2 W(F).(\xi\otimes \eta,\xi\otimes \eta) = \Psi''(\omega(F))\cdot \underbrace{[D \omega(F).(\xi\otimes \eta)]^2}_{=0}
		+ \underbrace{\Psi'(\omega(F))}_{>0}\cdot \underbrace{D^2 \omega(F).(\xi\otimes \eta,\xi\otimes \eta)}_{<0} \;<\; 0
\end{equation}
for $F\in\GLpn$ and $\xi\otimes \eta\in\Rnn$ satisfying \eqref{eq:omegaCondition}, since in that case, the Legendre-Hadamard ellipticity condition is violated at $F$.

\begin{remark}
	Note that by the usual interpretation of $\omega$ as the \emph{amount of strain} in a deformation, the assumption of (strict) monotonicity of $\Psi$ follows from basic physical reasoning, since an elastic energy function $W$ should always increase with increasing strain (cf.\ \cite[Section 2.2]{agn_neff2015exponentiatedI}). For example, if $\omega(F)=\norm{\log U}^2$, then the monotonicity of $\Psi$ is equivalent to the monotonicity of the mapping $t\mapsto W(t\cdot\id)$ on $(1,\infty)$, which, in turn, follows from the physically motivated requirement that the hydrostatic pressure corresponding to a purely volumetric deformation should be negative for extensions (and positive for compression).
	
	Furthermore, as we will discuss in Section \ref{Logarsectmon}, if $\omega$ is given by the deviatoric quadratic Hencky strain measure $\norm{\dev_3\log U}^2$ or by $\norm{\log U}^2$, then $\Psi'\geq 0$ \emph{must} hold everywhere if $\Psi\circ\omega$ is to be elliptic. In the deviatoric case, the strict inequality $\Psi'> 0$ also follows from the additional assumption that $W$ is compatible with linear elasticity, see Lemma \ref{remark:strictMonotonicityConditionLinearCompatibilityDev}.
\end{remark}

\subsection{A Saint-Venant-Kirchhoff type strain measure}
Before we apply Lemma \ref{lemma:impossibility} to the logarithmic strain measures discussed in Subsection \ref{section:introduction}, we consider the simple example of the \emph{Saint-Venant-Kirchhoff type strain measure}\footnote{Note that the classical Saint-Venant-Kirchhoff energy is well-known to be neither polyconvex nor rank-one convex \cite{Raoult1986}.}
\[
	\omegaSVK\col\GLpn\to\R\,,\quad \omegaSVK(F) = \norm{F^TF-\id}^2
\]
for arbitrary dimension $n\geq2$. Using \eqref{fdsvk}, we find 
\begin{align*}
	D\omegaSVK(F).H &= 4\.\iprod{FF^TF-F, H}\,,\\
	D^2\omegaSVK(F).(H,H) &= 4\.(\norm{HF^T}^2 + \norm{F^TH}^2 + \tr((F^TH)^2) - \norm{H}^2)
\end{align*}
for $F\in\GLpn$ and $H\in\Rnn$.
Thus, for $F=\frac12\.\id$ and the rank-one direction $H=e_1\otimes e_2$, where $e_i\in\R^n$ denotes the $i$-th unit vector, we find $\tr H=0$ and thus
\begin{align*}
	D\omegaSVK(F).H &= -\frac{3}{2}\.\iprod{\id, e_1\otimes e_2} = 0\,,\\
	D^2\omegaSVK(F).(H,H) &= 4\.(\norm{\tfrac12\.H}^2 + \norm{\tfrac12\.H}^2 + \tr((\tfrac12\.H)^2) - \norm{H}^2) = -\tfrac12\.\norm{H}^2 = -\tfrac12 < 0\,.
\end{align*}
Therefore, according to Lemma \ref{lemma:impossibility}, there is no strictly monotone increasing $\Psi\col[0,\infty)\to\R$ such that the mapping $F\mapsto\Psi(\norm{F^TF-\id}^2)$ is rank-one convex. In other words: for dimension $n\geq2$, there is no (physically viable) Legendre-Hadamard elliptic elastic energy in terms of the strain measure $\norm{F^TF-\id}^2 = \norm{C-\id}^2$.

\section{Logarithmic strain measures} \label{Logarsect}

Returning to the question of ellipticity of energy functions in terms of logarithmic strain measures, we start in the one-dimensional case. Identifying $\GLp(1)$ with $(0,\infty)$, the logarithmic strain measure $\norm{\log U}^2$ can be written as $(\log t)^2$ for $F=t\in(0,\infty)$.

\subsection{The one-dimensional case}

It is easily seen that the function $t\mapsto (\log t)^2$ is not convex. However, it is possible to find some function $\Psi\col\mathbb{R}_+\to \mathbb{R}$ which \enquote{convexifies} the logarithm in the sense that $t\mapsto \Psi((\log t)^2)$ is convex: Consider
\begin{align}\label{1d2d}
W(t)&=\Psi((\log t)^2),\notag\\
W^\prime(t)&=\Psi^\prime((\log t)^2)\, 2\, (\log t)\, \frac{1}{t},\\
W^{\prime\prime}(t)&=\Psi^{\prime\prime}((\log t)^2)\, 4\, (\log t)^2\, \frac{1}{t^2}+
\Psi^\prime((\log t)^2)\, 2\, \frac{1}{t^2}-\Psi^\prime((\log t)^2)\, 2\, (\log t)\, \frac{1}{t^2}\notag\\
&=\frac{1}{t^2}\left[\Psi^{\prime\prime}((\log t)^2)\, 4\, (\log t)^2\, +
\Psi^\prime((\log t)^2)\, 2\, (1- \log t)\right].\notag
\end{align}
Hence, the question whether $t\mapsto W(t)$ is convex can be restated as
\begin{align}
W^{\prime\prime}(t)\geq 0\quad &\iff\quad 
\Psi^{\prime\prime}((\log t)^2)\, 4\, (\log t)^2\, \geq-
\Psi^\prime((\log t)^2)\, 2\, (1- \log t)\notag\\ &\iff\quad 
\Psi^{\prime\prime}((\log t)^2)\geq -\frac{\frac{d^2}{dt^2}((\log t)^2)}{\left[\frac{d}{dt}((\log t)^2)\right]^2}=\frac{ \log t-1}{ 2\, (\log t)^2\,}\, \Psi^\prime((\log t)^2) \label{eq:1dfullConvexityStatement}
\end{align}
for all $t>0$ with $t\neq 1$.

\begin{figure}[h!]
	\begin{minipage}[h]{1\linewidth}
		\centering
		\includegraphics[scale=0.7]{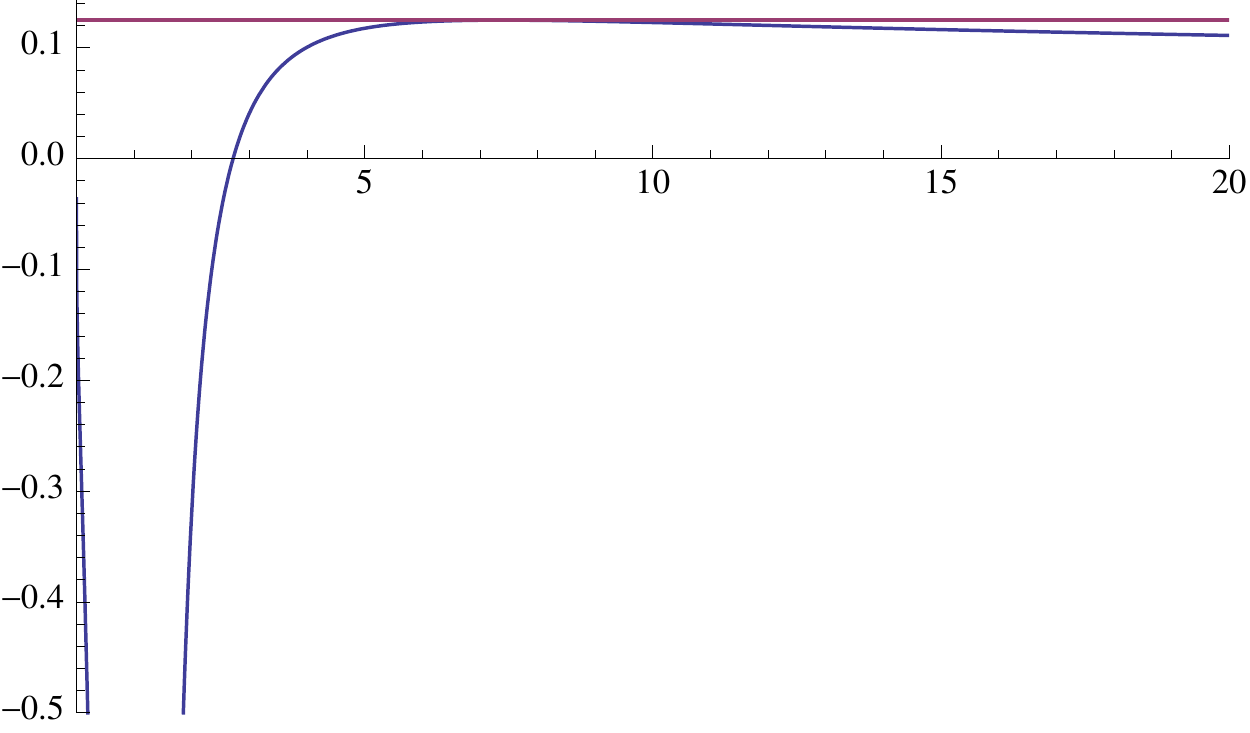} \hspace*{1cm}	
		\centering
		\caption{\footnotesize{The one dimensional representation of $-\frac{D_F^2(\norm{\log U}^2).(\xi\otimes\eta,\xi\otimes\eta)}{[D_F(\norm{\log U}^2).\xi\otimes\eta]^2}=-\frac{\frac{d^2}{dt^2}((\log t)^2)\,\xi^2}{[\frac{d}{dt}((\log t)^2)\,\xi]^2}=\frac{ \log t-1}{ 2\, (\log t)^2}$.} }\label{fig1d}
	\end{minipage}
\end{figure}
\begin{figure}[h!]
	\begin{minipage}[h]{1\linewidth}
		\centering
		\includegraphics[scale=0.7]{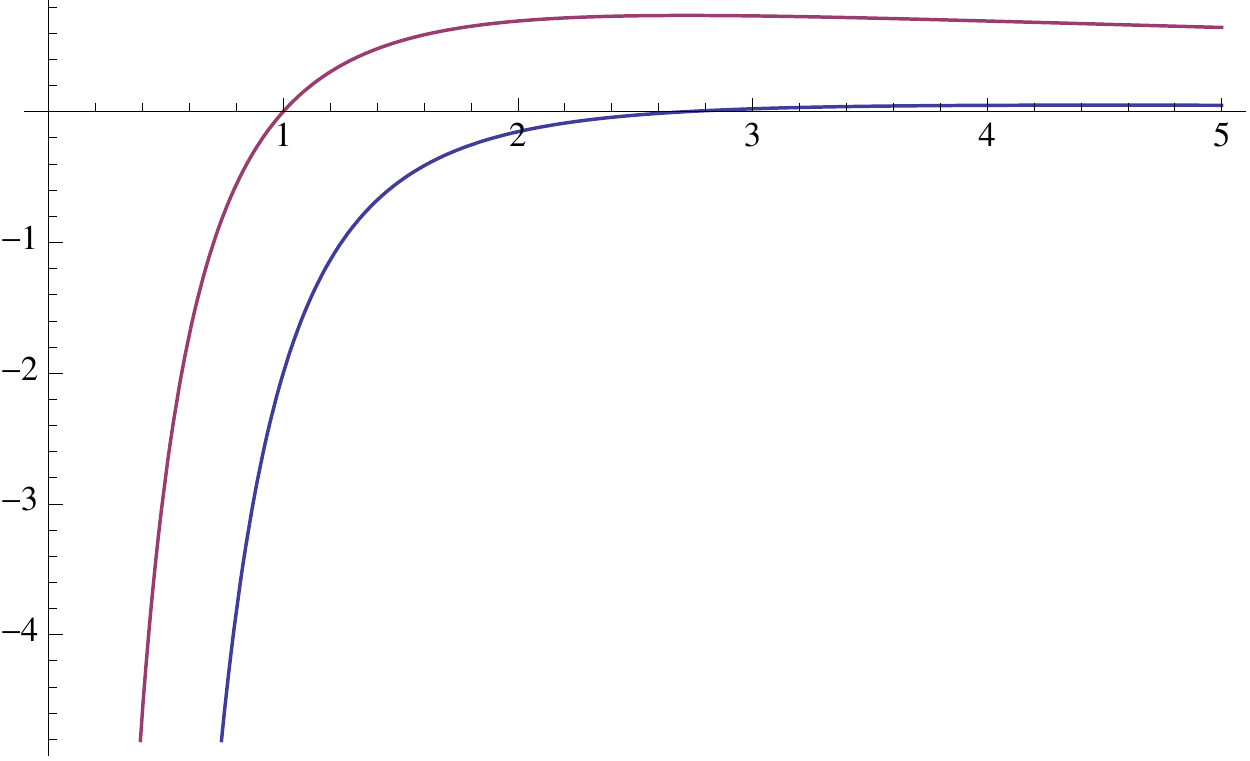} \hspace*{1cm}	
		\centering
		\caption{\footnotesize{The one dimensional representation of $t=F\mapsto-{D_F^2(\norm{\log U}^2).(\xi\otimes\eta,\xi\otimes\eta)}=-{\frac{d^2}{dt^2}((\log t)^2)}=-\frac{2(1-\log t)}{t^2}$ and of $t=F\mapsto{D_F(\norm{\log U}^2).\xi\otimes\eta}={\frac{d}{dt}((\log t)^2)}=\frac{2\log t}{t}$.} }\label{fig2d}
	\end{minipage}
\end{figure}

We observe first that $t\mapsto \frac{ \log t-1}{ 2\, (\log t)^2\,}$ is bounded above by $\frac{1}{8}$, see Fig.\ \ref{fig1d}, since
\begin{align}
\max_{t\in (0,\infty)}\frac{ \log t-1}{ 2\, (\log t)^2} = \frac{ \log t-1}{ 2\, (\log t)^2}\Big|_{t=e^2} = \frac{1}{8}\,.
\end{align}
In particular, there exists no \emph{concave critical point} of the mapping $t\mapsto (\log t)^2$, i.e.\ the conditions $-{\frac{d^2}{dt^2}((\log t)^2)}>0$ and ${\frac{d}{dt}((\log t)^2)}=\frac{2\log t}{t}=0$ are never satisfied for the same $t>0$.

If we also assume that $\Psi$ is monotone increasing and convex, then \eqref{eq:1dfullConvexityStatement} is always satisfied for $t<1$ and therefore reduces to the condition
\[
	\Psi^{\prime\prime}(s)\geq \frac{\sqrt{s}-1}{ 2\, s^2\,}\, \Psi^\prime(s) \quad\text{ for all }\; s>0
\]
which, for instance, is satisfied by any monotone convex function $\Psi\col\mathbb{R}\to \mathbb{R}$ with
\begin{align}
	\Psi^{\prime\prime}(s)\geq\frac{1}{8}\, \Psi^\prime(s) \qquad\text{for all }\; s>1\,.
\end{align}
For example, if $\Psi$ is given by $\Psi(s)=e^{\frac{1}{8}s}$, then the corresponding energy function $W\col\R\to\R$ with $W(x)=e^{\frac{1}{8}\.\log^2(x)}$ is convex with respect to $x$.

\subsection{Necessary conditions for rank-one convexity}\label{Logarsectmon}

Let $W\col\GLp(3)\to\mathbb{R}_+$ be an objective and isotropic function, and let $g$ denote its representation in terms of the singular values, i.e.\ $W(F)=g(\lambda_1,\lambda_2,\lambda_3)$ for all $F\in\GLp(3)$ with singular values $\lambda_1,\lambda_1,\lambda_3$. Then the \emph{Baker-Ericksen inequalities} can be stated as \cite{marsden1994foundations,bakerEri54}
\begin{align}
(\lambda_i-\lambda_j)\cdot \bigg( \lambda_i\frac{\partial g}{\partial \lambda_i}-\lambda_j\frac{\partial g}{\partial \lambda_j} \bigg) \geq 0 \qquad\text{for all }\;\lambda_i,\lambda_j\in(0,\infty)\,,\; i,j=1,2,3\,,
\end{align}
which is equivalent to \cite{silhavy2002monotonicity}
\begin{align}\label{inegg}
g(\lambda_1,\lambda_2,\lambda_3)\geq g(\overline{\lambda}_1,\overline{\lambda}_2,\overline{\lambda}_3)
\end{align}
for all $(\lambda_1,\lambda_2,\lambda_3)$ and $(\overline{\lambda}_1,\overline{\lambda}_2,\overline{\lambda}_3)$ such that
\[
\lambda_1\geq \lambda_2\geq \lambda_3, \qquad \overline{\lambda}_1\geq \overline{\lambda}_2\geq \overline{\lambda}_3, 
\]
and
\[
\lambda_1\geq \overline{\lambda}_1, \quad \lambda_1\lambda_2\geq \overline{\lambda}_1\overline{\lambda}_2, \quad \lambda_1\lambda_2\lambda_3\geq \overline{\lambda}_1\overline{\lambda}_2\overline{\lambda}_3.
\]
It is well known \cite{dacorogna01,silhavy1997mechanics} that the Baker-Ericksen inequalities are a necessary\footnote{%
	Note that the Baker-Ericksen inequalities are not sufficient for rank-one convexity; for example, the mapping $F\mapsto \norm{\log F^T F}^2$ is not rank-one convex, while the corresponding representation $g$ in terms of the singular values satisfies \eqref{inegg} \cite{agn_borisov2015sum}.%
}
condition for rank-one convexity of the energy $W$.
In particular, the rank-one convexity of $W$ therefore implies
\begin{align}
	g(\lambda_1,1,1)\geq g(\overline{\lambda}_1,1,1) \qquad\text{for all }\; \lambda_1,\overline{\lambda}_1\in \mathbb{R}_+\,,\; \lambda_1\geq\overline{\lambda}_1\geq 1\,,
\end{align}
i.e.\ that the mapping $x\mapsto g(x,1,1)$ is monotone increasing on $[1,\infty)$,
as well as 
\begin{align}
	g(1,1,\lambda_1)\geq g(1,1, \overline{\lambda}_1) \qquad\text{for all }\; \lambda_1,\overline{\lambda}_1\in \mathbb{R}_+\,,\; \lambda_1\leq\overline{\lambda}_1\leq 1\,,
\end{align}
i.e.\ that the mapping $x\mapsto g(1,1,x)$ is monotone decreasing on $(0,1]$. Since $g$ is symmetric, the mapping $x\mapsto g(x,1,1)$ is monotone decreasing on $(0,1]$ as well for rank-one convex energies.

If $W$ is of the form $W(F) = \Psi(\norm{\log U}^2) = g(\lambda_1,\lambda_2,\lambda_3)$ with a differentiable function $\Psi$, then the representation of $W(F)$ is terms of the singular values of $F$ is given by
\begin{align}
	g(\lambda_1,\lambda_2,\lambda_3)=\Psi(\log^2\lambda_1+\log^2\lambda_2+\log^2\lambda_3)\,.
\end{align}
Thus the rank-one convexity of such an energy implies that the mapping $x\mapsto g(x,1,1) = \Psi(\log^2 x)$ must be monotone increasing on $[1,\infty)$ and monotone decreasing on $(0,1]$. Since
\begin{align}
	\frac{d}{dx}\Psi(\log^2x)=\Psi^\prime(\log^2 x)\cdot \frac{2\.\log x}{x} %
\end{align}
it follows that the rank-one convexity of $F\mapsto W(F)=\Psi(\norm{\log U}^2)$ implies that $\Psi$ is monotone increasing on $[0,\infty)$. The same result holds true for arbitrary dimension $n$.

Similar conditions hold if $W$ is given in terms of the deviatoric logarithmic strain measure $\norm{\dev_3\log U}^2$, i.e.\ if $W$ is of the form $W(F)=\Psi(\norm{\dev_3\log U}^2)$. In that case,
\begin{align}
W(F)=g(\lambda_1,\lambda_2,\lambda_3)=\Psi \left( \frac{1}{3} \left( \log^2 \frac{\lambda_1}{\lambda_2}+\log^2 \frac{\lambda_2}{\lambda_3}+\log^2 \frac{\lambda_3}{\lambda_1} \right) \right)\,,
\end{align}
thus rank-one convexity of $W$ implies that the mapping $x\mapsto g(x,1,1)=\Psi(\frac{2}{3}\log^2 x)$ is monotone increasing on $[1,\infty)$ and monotone decreasing on $(0,1]$, which, in turn, implies that $\Psi$ must be monotone increasing on $[0,\infty)$. Again, the same monotonicity condition must be satisfied for arbitrary dimension $n$.

A similar implication was found by Sendova and Walton \cite{sendova2005strong}, who considered energy functions of the form
\[
	W(F) = \Phi \left( \log \det U,\; \norm{\dev_3 \log U},\; 3\.\sqrt{6}\.\det \left(\frac{\dev_3 \log U}{\norm{\dev_3\log U}} \right) \right)\,.
\]
In particular \cite[Proposition 2]{sendova2005strong}, they showed that if a mapping of the form $F\mapsto\widetilde{\Psi}(\norm{\dev_3 \log U})$ is Legendre-Hadamard elliptic (i.e.\ rank-one convex), then\footnote{%
Although Sendova and Walton considered strong Legendre-Hadamard ellipticity and deduced a strict version of the inequalities \eqref{noi}, their proof can easily be seen to work for (non-strict) rank-one convexity as well.
}%
\begin{align}\label{noi}
	\widetilde{\Psi}^\prime(t) \geq 0\qquad \text{and}\qquad \widetilde{\Psi}^{\prime\prime}(t)\geq \left(\frac{3\,t}{8}+\frac{1}{t}\right)\widetilde{\Psi}^\prime(t)
\end{align}
for all $t>0$. Of course, for $W(F)=\Psi(\norm{\dev_3\log U}^2)=\widetilde{\Psi}(\norm{\dev_3\log U})$, the representations $\Psi$ and $\widetilde{\Psi}$ are connected by the equality
\[
	\Psi(t^2) = \widetilde{\Psi}(t)\,,
\]
thus rank-one convexity of $W$ implies
\[
	0 \leq \widetilde{\Psi}'(t) = 2\.t\,\Psi'(t^2) \quad\implies\quad 0 \leq \Psi'(t^2)
\]
as well as
\[
	4\.t^2\,\Psi''(t^2) + 2\.\Psi'(t^2) =\widetilde{\Psi}''(t) \geq \left(\frac{3\.t}{8}+\frac{1}{t}\right)\widetilde{\Psi}'(t) =
	\left(\frac{3\.t}{8}+\frac{1}{t}\right)\cdot 2\.t\.\Psi'(t^2) =
	\left(\frac{3\.t^2}{4}+2\right)\Psi'(t^2)
\]
or, equivalently,
\begin{align}
	\Psi''(t^2) \;\geq\; \frac{3}{16}\.\Psi'(t^2)
\end{align}
for all $t>0$. In particular, rank-one convexity of $F\mapsto \Psi(\norm{\dev_3\log U}^2)$ therefore implies
\[
	\Psi'(x) \geq 0 \qquad\text{and}\qquad \Psi''(x) \geq 0 \qquad\text{ for all }\;x>0\,.
\]

Moreover, it can be inferred from \cite[Proposition 2]{sendova2005strong} that a necessary condition for \emph{strict} Legendre-Hadamard ellipticity of an energy $W$ with $W(F)=\Psi(\norm{\dev_3\log U}^2)$ is that $\Psi\col[0,\infty)\to \mathbb{R}$ must be strictly monotone and uniformly convex. We note that the strict monotonicity of $\Psi$ also follows from the (not necessarily strict) Legendre-Hadamard ellipticity (i.e.\ from classical rank-one convexity) if $\Psi$ is two-times continuously differentiable and $\Psi'(0)>0$, since, in that case, the convexity of $\Psi$ implies $\Psi'(x)\geq\Psi'(0)>0$ for all $x>0$.

The results of this section are summarized in the following lemmas.

\begin{lemma}
\label{lemma:monotonicity}
	Let $\Psi\col[0,\infty)\to\R$ be continuously differentiable such that the mapping $F\mapsto \Psi(\norm{\log U}^2)$ is rank-one convex on $\GLpn$. Then $\Psi$ is monotone increasing.
\end{lemma}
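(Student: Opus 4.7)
The plan is to deduce the monotonicity of $\Psi$ from the Baker-Ericksen inequalities, which are a well-known necessary consequence of rank-one convexity for objective and isotropic energies. First, I would write $W(F) = \Psi(\norm{\log U}^2)$ in terms of the singular values of $F$, obtaining the symmetric representation
\[
	g(\lambda_1,\ldots,\lambda_n) = \Psi\!\left(\textstyle\sum_{i=1}^n \log^2\lambda_i\right).
\]
Since rank-one convexity of $W$ entails the Baker-Ericksen inequalities in the form \eqref{inegg}, and since the tuple $(x,1,\ldots,1)$ with $x\geq\bar{x}\geq 1$ dominates $(\bar{x},1,\ldots,1)$ in every required minor comparison ($\lambda_1\geq\bar\lambda_1$, $\lambda_1\lambda_2\geq\bar\lambda_1\bar\lambda_2$, and so on), I obtain at once that
\[
	x\mapsto g(x,1,\ldots,1) = \Psi(\log^2 x)
\]
is monotone increasing on $[1,\infty)$. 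This is precisely the $n$-dimensional analogue of the argument already carried out in the prose preceding the lemma.

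Next, I would differentiate. By the chain rule,
\[
	\frac{d}{dx}\Psi(\log^2 x) = \Psi'(\log^2 x)\cdot \frac{2\log x}{x},
\]
and since $\frac{2\log x}{x}>0$ for every $x>1$, the monotonicity established above forces $\Psi'(\log^2 x)\geq 0$ for all $x>1$. As $x$ ranges over $(1,\infty)$, the argument $s=\log^2 x$ ranges over all of $(0,\infty)$, so $\Psi'(s)\geq 0$ for every $s>0$. Continuity of $\Psi'$ then extends the inequality to $s=0$, yielding the desired monotonicity of $\Psi$ on $[0,\infty)$.

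There is essentially no hard step here: the statement is a direct formalization of the argument sketched in Section \ref{Logarsectmon}. The only point that merits a sentence of care is the appeal to Baker-Ericksen, since rank-one convexity is defined on $\GLpn$ (where the constraint $\det(F+t\,\xi\otimes\eta)>0$ restricts the admissible $t$), whereas \eqref{inegg} is usually phrased without such a restriction; however, evaluating at $F=\id$ and using only diagonal comparisons of the form $(x,1,\ldots,1)$ vs.\ $(\bar x,1,\ldots,1)$ with $x,\bar x>0$ poses no difficulty, as these tuples correspond to positive-definite diagonal $F$ and are reached along admissible rank-one segments in $\GLpn$.
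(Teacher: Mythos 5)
Your proposal is correct and follows essentially the same route as the paper: both deduce from rank-one convexity the Baker--Ericksen inequalities, use them to show that $x\mapsto g(x,1,\dotsc,1)=\Psi(\log^2 x)$ is monotone increasing on $[1,\infty)$, and then differentiate via the chain rule to conclude $\Psi'\geq 0$ on $(0,\infty)$ and, by continuity, on $[0,\infty)$. The extra sentence of care about applying \eqref{inegg} on $\GLpn$ is a welcome but inessential refinement of the paper's argument.
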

\begin{lemma}
\label{lemma:monotonicityDev}
	Let $\Psi\col[0,\infty)\to\R$ be continuously differentiable such that the mapping $F\mapsto \Psi(\norm{\dev_n \log U}^2)$ is rank-one convex on $\GLpn$. Then $\Psi$ is monotone increasing.
\end{lemma}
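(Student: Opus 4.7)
The plan is to mirror the argument already given in the text for the $n=3$ case, using the Baker--Ericksen inequalities as a necessary condition for rank-one convexity and then evaluating the isotropic representation $g$ of $W$ along a one-parameter family of diagonal matrices.

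First I would recall that since $W(F)=\Psi(\norm{\dev_n\log U}^2)$ is objective and isotropic, it admits a symmetric representation $W(F)=g(\lambda_1,\dots,\lambda_n)$ in terms of the singular values of $F$. The Baker--Ericksen inequalities hold in arbitrary dimension \cite{silhavy2002monotonicity,silhavy1997mechanics}, and by exactly the same monotonicity argument that was presented in Section \ref{Logarsectmon} for $n=3$, rank-one convexity forces the mapping $x\mapsto g(x,1,\dots,1)$ to be monotone increasing on $[1,\infty)$.

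Next I would compute this mapping explicitly. For $F=\diag(x,1,\dots,1)$ with $x>0$ one has $\log U=\diag(\log x,0,\dots,0)$, whose trace equals $\log x$. Subtracting the spherical part $\tfrac{1}{n}(\log x)\,\id$ gives
\begin{equation*}
\dev_n\log U \;=\; \diag\!\Bigl(\tfrac{n-1}{n}\log x,\,-\tfrac{1}{n}\log x,\,\dots,\,-\tfrac{1}{n}\log x\Bigr),
\end{equation*}
so a direct computation yields $\norm{\dev_n\log U}^2=\tfrac{n-1}{n}\log^2 x$. Consequently
\begin{equation*}
g(x,1,\dots,1) \;=\; \Psi\!\left(\tfrac{n-1}{n}\log^2 x\right).
\end{equation*}

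Finally, the monotonicity of $x\mapsto \Psi\bigl(\tfrac{n-1}{n}\log^2 x\bigr)$ on $[1,\infty)$, combined with the fact that the inner mapping $x\mapsto \tfrac{n-1}{n}\log^2 x$ is a strictly increasing bijection from $[1,\infty)$ onto $[0,\infty)$, immediately yields that $\Psi$ is monotone increasing on $[0,\infty)$. No serious obstacle is expected: the only dimension-dependent ingredient is the constant $\tfrac{n-1}{n}$ coming from the deviatoric projection, and it remains strictly positive for every $n\geq 2$, so the argument goes through verbatim.
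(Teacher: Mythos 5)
Your proposal is correct and follows essentially the same route as the paper: Baker--Ericksen inequalities as a necessary condition for rank-one convexity, restriction of the singular-value representation $g$ to $(x,1,\dots,1)$, and the explicit evaluation $\norm{\dev_n\log U}^2=\tfrac{n-1}{n}\log^2 x$ (which for $n=3$ reproduces the paper's constant $\tfrac{2}{3}$). The only difference is that you carry out the general-$n$ computation explicitly where the paper does $n=3$ and asserts that the argument extends to arbitrary dimension.
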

\begin{lemma}
\label{lemma:strictMonotonicityConditionDev}
	Let $\Psi\col[0,\infty)\to\R$ be two-times continuously differentiable such that the mapping $F\mapsto \Psi(\norm{\dev_3 \log U}^2)$ is rank-one convex on $\GLp(3)$. Then
	\begin{itemize}
		\item[i)] $\Psi$ is convex,
		\item[ii)] if $\Psi'(0)>0$, then $\Psi'(x)>0$ for all $x>0$.
	\end{itemize}
\end{lemma}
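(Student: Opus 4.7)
My plan is to reduce the lemma to the Sendova-Walton inequality \eqref{noi}, which is the single non-trivial analytical input and which is already stated in the discussion immediately preceding the lemma. Setting $\widetilde{\Psi}(t)\colonequals\Psi(t^2)$, the hypothesis that $F\mapsto\Psi(\norm{\dev_3\log U}^2)$ is rank-one convex on $\GLp(3)$ is identical to the hypothesis that $F\mapsto\widetilde{\Psi}(\norm{\dev_3\log U})$ is rank-one convex. I may then invoke \cite[Proposition~2]{sendova2005strong}, whose argument, as the footnote above \eqref{noi} records, carries over from strong Legendre-Hadamard ellipticity to the non-strict rank-one convex case, to obtain both $\widetilde{\Psi}'(t)\geq 0$ and $\widetilde{\Psi}''(t)\geq(\tfrac{3t}{8}+\tfrac{1}{t})\widetilde{\Psi}'(t)$ for all $t>0$.

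Next, I would translate \eqref{noi} back from $\widetilde{\Psi}$ to $\Psi$ via the chain-rule identities $\widetilde{\Psi}'(t)=2t\,\Psi'(t^2)$ and $\widetilde{\Psi}''(t)=4t^2\Psi''(t^2)+2\Psi'(t^2)$, exactly as done in the text just before the lemma. The first inequality collapses to $\Psi'(x)\geq 0$ for every $x>0$, and the second, after cancelling the term $2\Psi'(t^2)$ appearing on both sides, reduces to
\begin{equation*}
\Psi''(x)\;\geq\;\frac{3}{16}\,\Psi'(x)\qquad\text{for all }x>0\,.
\end{equation*}
Combined, these two inequalities give $\Psi''(x)\geq 0$ on $(0,\infty)$. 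Continuity of $\Psi''$ at the origin, which is ensured by $\Psi\in C^2([0,\infty))$, then extends this to $\Psi''(0)\geq 0$, so $\Psi$ is convex on the full half-line $[0,\infty)$; this proves part i).

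For part ii) I would simply observe that convexity of $\Psi$ (just established in i)) forces $\Psi'$ to be monotone non-decreasing on $[0,\infty)$, so the additional hypothesis $\Psi'(0)>0$ propagates to $\Psi'(x)\geq\Psi'(0)>0$ for every $x\geq 0$. The only genuine obstacle in this strategy is the Sendova-Walton inequality itself, which relies on a judicious choice of rank-one direction in the Legendre-Hadamard condition and is here treated as a black box; beyond that, every subsequent step is either a change of variables, a continuity argument at the endpoint $x=0$, or the elementary monotonicity consequence of convexity.
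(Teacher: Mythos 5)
Your proposal is correct and follows essentially the same route as the paper: the text immediately preceding the lemma derives exactly the chain of implications you describe, from the Sendova--Walton conditions \eqref{noi} via $\widetilde{\Psi}(t)=\Psi(t^2)$ to $\Psi'(x)\geq 0$ and $\Psi''(x)\geq\frac{3}{16}\Psi'(x)\geq 0$, and then obtains part ii) from convexity together with $\Psi'(0)>0$. No gaps; the lemma is simply the paper's summary of that discussion.
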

\begin{remark}
\label{remark:strictMonotonicityConditionLinearCompatibilityDev}
	The requirement $\Psi'(0)>0$ in Lemma \ref{lemma:strictMonotonicityConditionDev} is necessarily satisfied if the elastic energy $\Wiso\col\GLp(3)\to\R$ with $W(F)=\Psi(\norm{\dev_3\log U}^2)$ is of the form
	\begin{equation}
		\label{eq:linearCompatibilityDev}
		\Wiso(\id+H) = \mu\.\norm{\dev_3 \sym H}^2 + \mathcal{O}(\norm{H}^3)
	\end{equation}
	with $\mu>0$, where $\sym H = \frac12(H+H^T)$ is the symmetric part of $H\in\R^{3\times3}$, since
	\begin{align*}
		D_F^2 W(\id).(H,H)
		&=\Psi''(0)\cdot
			\smash{%
				\underbrace{%
					[(D_F \norm{\dev_3\log U}^2|_{F=\id}).H]^2
				}_{=0 \vphantom{=2\.\norm{\dev_3 \sym H}^2}}
				\,+\, \Psi'(0)\cdot
				\underbrace{%
					(D_F^2 \norm{\dev_3\log U}^2|_{F=\id}).(H,H)
				}_{=2\.\norm{\dev_3 \sym H}^2}
			}
	\end{align*}
	for $H\in\R^{3\times3}$.
	
	Since a function $\Wiso$ depending only on $\dev_3 \log U$ is always \emph{isochoric}, i.e.\ $\Wiso(a\.F)=\Wiso(F)$ for all $a>0$, it is often coupled additively with a \emph{volumetric} function depending on $\det F = \det U$ to obtain a viable elastic energy potential $W$ of the form $W(F)=\Wiso(F)+\Wvol(\det F)$. In that case, $W$ can only be \emph{compatible with classical linear elasticity}, i.e.\ be of the form
	\begin{equation}
	\label{eq:linearCompatibility}
		W(\id+H) = \mu\.\norm{\dev_3 \sym H}^2 + \frac{\kappa}{2}\.[\tr(\sym H)]^2 + \mathcal{O}(\norm{H}^3)
	\end{equation}
	with $\mu>0$ and $\kappa>0$, if \eqref{eq:linearCompatibilityDev} is satisfied for $\Wiso$ and thus if $\Psi'(0)>0$. This so-called \emph{volumetric-isochoric split} is discussed further in Section \ref{volisosect}.
\end{remark}

\section{Functions depending on $\norm{\log U}^2$}\label{log}

Although it was shown in the previous section that if an energy $W$ of the form $W(F) = \Psi(\norm{\log U}^2)$ is to be rank-one convex on $\GLpn$ the function $\Psi$ must be monotone increasing, we will assume in the following that the monotonicity of $\Psi$ is strict. In particular, this restriction excludes the trivial examples of constant functions $\Psi$, for which the energy $W$ would obviously be rank-one convex and polyconvex. Our main result is the following.
\begin{proposition}\label{thlog}
	There is no strictly monotone function $\Psi\col[0,\infty)\to\mathbb{R}$ such that
	\begin{align}\label{fde1}
	F\mapsto W(F) = \Psi(\norm{\log U}^2) = \Psi(\norm{\log V}^2)
	\end{align}
	is rank-one convex in $\GLp(n)$, $n\geq 2$.
\end{proposition}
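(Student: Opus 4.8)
The plan is to apply Lemma~\ref{lemma:impossibility} to the strain measure $\omega(F)=\norm{\log U}^2$: it suffices to produce a single $F\in\GLpn$ and a rank-one direction $\xi\otimes\eta$ at which $\omega$ has a \emph{concave critical point}, i.e.\ $D\omega(F).(\xi\otimes\eta)=0$ while $D^2\omega(F).(\xi\otimes\eta,\xi\otimes\eta)<0$. Since $\omega$ is objective and isotropic with singular-value representation $g(\lambda_1,\dots,\lambda_n)=\sum_i\log^2\lambda_i$, I would first reduce by frame-indifference to a diagonal $F=\mathrm{diag}(\lambda_1,\dots,\lambda_n)$, for which $g_{,i}=\tfrac{2\log\lambda_i}{\lambda_i}$, $g_{,ii}=\tfrac{2(1-\log\lambda_i)}{\lambda_i^2}$ and $g_{,ij}=0$ $(i\neq j)$. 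The first derivative then collapses to the diagonal contribution $D\omega(F).(\xi\otimes\eta)=\sum_i g_{,i}\,\xi_i\eta_i$, while for the rank-one Hessian I would use (and verify directly via the planar identity $\norm{\log U}^2=\tfrac12(\log\det U)^2+\tfrac12\,\mathrm{arccosh}^2\!\big(\tfrac{\tr C}{2\det U}\big)$) the standard closed form for isotropic energies. Restricted to a $2\times2$ block (coordinates $1,2$, with all other $\lambda_k=1$) it reads
\[
 D^2\omega(F).(\xi\otimes\eta,\xi\otimes\eta)=g_{,11}u_1^2+g_{,22}u_2^2+K\,u_1u_2+L\,(w_1^2+w_2^2),
\]
where $u_i=\xi_i\eta_i$, $w_1=\xi_1\eta_2$, $w_2=\xi_2\eta_1$, and the shear coefficient $L=\tfrac{\lambda_1 g_{,1}-\lambda_2 g_{,2}}{\lambda_1^2-\lambda_2^2}$ is a Baker--Ericksen combination.

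The difficulty is that no obvious direction works. For $\lambda_1>\lambda_2$ one has $L>0$, so any pure shear $(u_1=u_2=0)$ gives a strictly positive value; conversely a pure axial stretch $(w_1=w_2=0)$ forces $u_1u_2=0$ through the rank-one constraint $\det(\xi\otimes\eta)=0$, that is $u_1u_2=w_1w_2$, and reduces the problem to the one-dimensional map $\lambda\mapsto(\log\lambda)^2$, which has no concave critical point (Section~\ref{Logarsect}). A genuine stretch--shear mixture is therefore unavoidable, and the positive $L$-term must be dominated. Moreover the first-order relation $g_{,1}u_1+g_{,2}u_2=0$ couples $u_1$ and $u_2$ and forces them to carry opposite signs whenever $\log\lambda_1$ and $\log\lambda_2$ share a sign.

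The heart of the argument---and the step I expect to be the main obstacle---is to show that the surviving scalar inequality is attainable. Since $L>0$, I would minimize the shear contribution under the constraint $w_1w_2=u_1u_2$: the arithmetic--geometric mean inequality gives $w_1^2+w_2^2\ge 2|u_1u_2|$, with equality realized by an explicit choice of $\xi,\eta$. Inserting this together with $u_1=-\tfrac{g_{,2}}{g_{,1}}u_2$ collapses the Hessian into a single expression in $(\log\lambda_1,\log\lambda_2)$ whose sign is to be made negative. I expect this to require both stretches to lie beyond the concavity threshold $\lambda=e$ of $\lambda\mapsto(\log\lambda)^2$ and to be strongly separated: letting $\lambda_1\to\infty$ with $\lambda_2=e^{\beta}$ fixed, the minimized second derivative tends to $-\tfrac{2}{\lambda_2^2}(\beta-1)$, so every $\beta>1$ with $\lambda_1$ large enough furnishes a concave critical point (for instance $\lambda_1=e^{10}$, $\lambda_2=e^{2}$). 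Finally, because the construction lives entirely in a $2\times2$ block and leaves the remaining singular values equal to $1$---contributing nothing to $\norm{\log U}^2$---the very same $F$ and direction prove the statement for every $n\ge2$, and Lemma~\ref{lemma:impossibility} rules out any strictly monotone $\Psi$.
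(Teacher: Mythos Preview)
Your strategy is sound and matches the paper's at the top level: both invoke Lemma~\ref{lemma:impossibility}, reduce to a $2\times2$ block, and exhibit a diagonal $F$ with a rank-one direction at which $\omega(F)=\norm{\log U}^2$ has a concave critical point. The paper, however, does not use the singular-value Hessian formula; it fixes $F=\mathrm{diag}(e^8,e^2)$ and $\xi=\tfrac{1}{\sqrt2}(1,1)^T$, solves the orthogonality condition $\langle(\log V)\xi,F^{-T}\eta\rangle=0$ for $\eta$, computes the two eigenvalues $\mu_i(t)$ of $(F+t\,\xi\otimes\eta)(F+t\,\xi\otimes\eta)^T$ in closed form, and evaluates $h''(0)=\tfrac{110-2e^{12}}{e^{12}-1}<0$ directly. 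Your route via the Knowles--Sternberg/Ball second-derivative formula is more structural and explains \emph{why} the construction works (as $\lambda_1\to\infty$ all terms vanish except $g_{,22}u_2^2=\tfrac{2(1-\log\lambda_2)}{\lambda_2^2}$, forcing $\lambda_2>e$), while the paper's computation is self-contained and avoids quoting that formula.

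One point to tighten: the off-diagonal part of the rank-one Hessian at diagonal $F$ is not $L(w_1^2+w_2^2)$ with a single coefficient, but rather
\[
\frac{g_{,1}-g_{,2}}{2(\lambda_1-\lambda_2)}\,(w_1+w_2)^2+\frac{g_{,1}+g_{,2}}{2(\lambda_1+\lambda_2)}\,(w_1-w_2)^2,
\]
i.e.\ two distinct shear moduli. Minimizing this under $w_1w_2=u_1u_2<0$ (set $p=w_1+w_2$, $q=w_1-w_2$, so $q^2=p^2-4u_1u_2$) gives $p=0$ and the minimum $\tfrac{2(g_{,1}+g_{,2})}{\lambda_1+\lambda_2}|u_1u_2|$, not $2L|u_1u_2|$. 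Fortunately both expressions are $o(1)$ as $\lambda_1\to\infty$ with $\lambda_2$ fixed, so your asymptotic limit $-\tfrac{2}{\lambda_2^2}(\beta-1)$ and the concrete choice $(\lambda_1,\lambda_2)=(e^{10},e^{2})$ survive; but the intermediate formula should be corrected (your planar $\mathrm{arccosh}$ identity would indeed produce the right expression).
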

\begin{proof}
Our aim is to use Lemma \ref{lemma:impossibility} and, therefore, to show that there exist $F\in\GLpn$ and $\xi,\eta\in\mathbb{R}^n\setminus \{0\}$ such that 
\begin{align}\label{ident1}
	D_F(\norm{\log U}^2).(\xi\otimes\eta)=0 \qquad \text{and}\qquad D_F^2(\norm{\log U}^2).(\xi\otimes\eta,\xi\otimes\eta)<0\,.
\end{align}
Since (see Appendix \ref{appendix:logDerivative})
\[
	D_F(\norm{\log U}^2).(\xi\otimes \eta) = \iprod{2\,(\log V)\, F^{-T}, \xi\otimes \eta}
\]
for all $F\in\GLpn$ and all $\xi,\eta\in\R^n$, the conditions \eqref{ident1} are satisfied if there are $F\in \GLp(n)$ and two directions $\xi,\eta\neq 0$ such that
\begin{align}
\langle (\log V)\, \xi, F^{-T}\, \eta\rangle=0,
\qquad 
\text{and}
\qquad 
D_F^2(\norm{\log V}^2).(\xi\otimes\eta,\xi\otimes\eta)<0.
\end{align}

It is difficult to compute $D_F^2(\norm{\log V}^2).(\xi\otimes\eta,\xi\otimes\eta)$ explicitly without resorting to a cumbersome eigenvector representation.
However, using the function $h\col(-\varepsilon,\varepsilon)\to \mathbb{R}_+$ with
\begin{align}
h(t)&=\norm{\log \sqrt{(F+t\,\xi\otimes\eta)^T(F+t\,\xi\otimes\eta)}}^2=\norm{\log \sqrt{(F+t\,\xi\otimes\eta)(F+t\,\xi\otimes\eta)^T}}^2\notag\\
&=
\frac{1}{4}\norm{\log (F+t\,\xi\otimes\eta)(F+t\,\xi\otimes\eta)^T}^2=\frac{1}{4}\sum\limits_{i=1}^n\log^2 \mu_i(t),
\end{align}
where
$\mu_i(t)$, $i=1,2,...,n$ are the eigenvalues of $(F+t\,\xi\otimes\eta)(F+t\,\xi\otimes\eta)^T$ and $\varepsilon>0$ is small enough such that $\mu_i(t)>0$ for all $i=1,2,...,n$ and all $t\in(-\varepsilon,\varepsilon)$, we may write
\begin{align}
h'(t)&=
\frac{1}{4}\langle D_F (\norm{\log (F+t\,\xi\otimes\eta)(F+t\,\xi\otimes\eta)^T}^2),\xi \otimes \eta\rangle,\\
h''(t)&=
\frac{1}{4} D^2_F (\norm{\log (F+t\,\xi\otimes\eta)(F+t\,\xi\otimes\eta)^T}^2). (\xi \otimes \eta,\xi \otimes \eta).\notag
\end{align}
Hence, since
\begin{align}
h'(0)&=
\frac{1}{4}\langle D_F (\norm{\log { F\,F^T}}^2),\xi \otimes \eta\rangle \qquad\text{and}\qquad
h''(0)=
\frac{1}{4} D^2_F (\norm{\log { F\, F^T}}^2). (\xi \otimes \eta,\xi \otimes \eta)\,,
\end{align}
the conditions \eqref{ident1} are satisfied if
\begin{align}\label{condh}
 h''(0)< 0\qquad \text{and}\qquad h'(0)&=
 \frac{1}{4}\langle (\log V)\, \xi, F^{-T}\, \eta\rangle=0\,.
\end{align}

We note that once the result is established for $n=2$, it can immediately be extended to arbitrary dimension $n$ (by suitable restriction).
\begin{figure}[h!]
	\begin{minipage}[h]{1\linewidth}
		\centering
		\vspace*{1cm}
		\includegraphics[scale=0.6,angle =0]{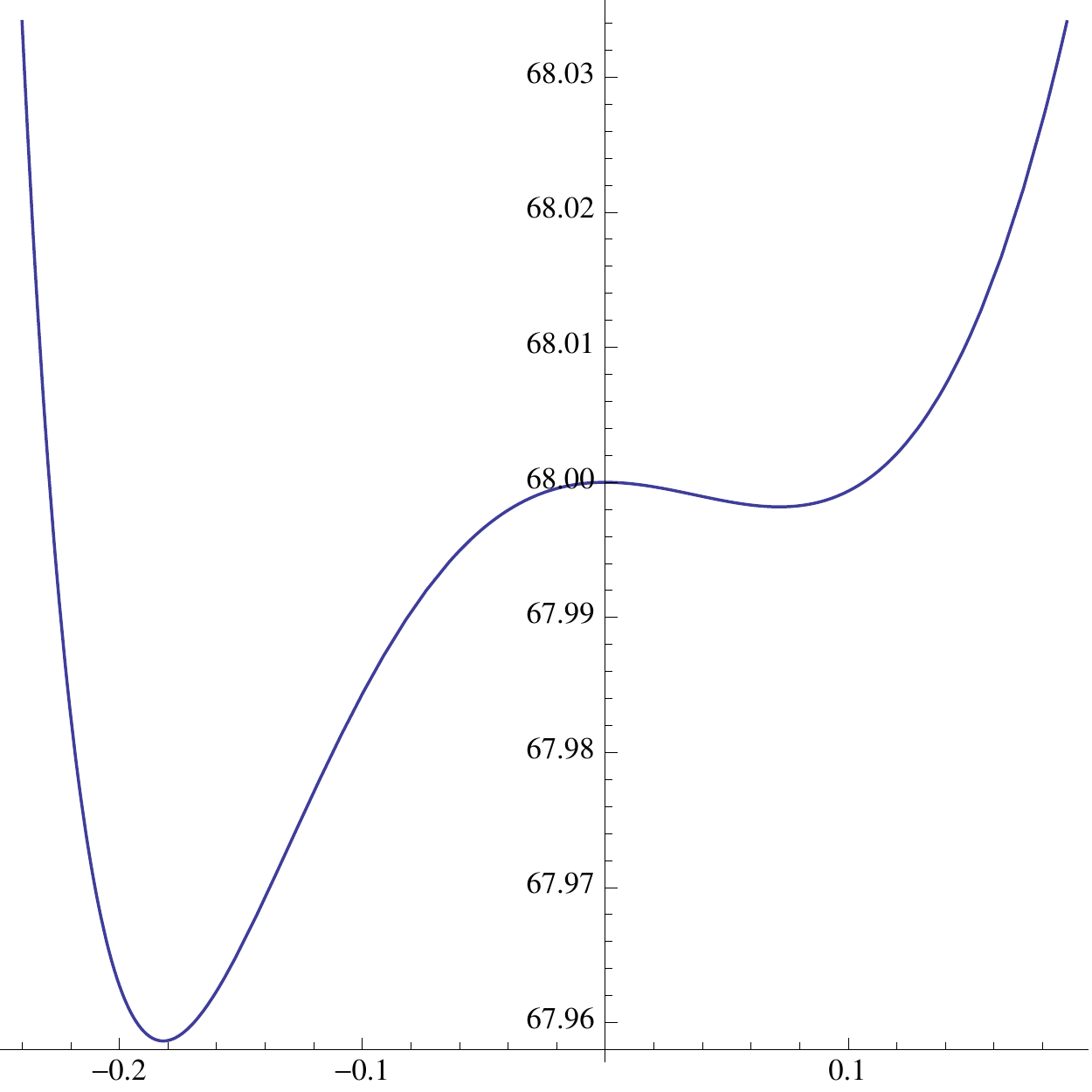}	
		\centering
		\caption{\footnotesize{The function $h$ has a critical point and is concave at $t=0$.} }\label{region}
	\end{minipage}
\end{figure}
In the two-dimensional case, the equation 
$\langle (\log V)\, \xi, F^{-T}\, \eta\rangle=0$
is satisfied if
\begin{align}\label{zo}
 F^{-T}\, \eta&=a\,\begin{pmatrix}
 ((\log V)\, \xi)_2\\-((\log V)\, \xi)_1
 \end{pmatrix} \quad\iff\quad \eta=a\, F^{T}\,\begin{pmatrix}
 ((\log V)\, \xi)_2\\-((\log V)\, \xi)_1
 \end{pmatrix}=a\, F^{T}\,\begin{pmatrix}
 0&1\\
 -1&0
 \end{pmatrix}(\log V)\, \xi
\end{align}
for some number $a\in\R$.

Let $F=\matr{e^8&0\\0&e^2}$ and $\xi=\frac{\sqrt{2}}{2}\matr{1\\1}$. Then for $a=-1$, \eqref{zo} yields $\eta=\sqrt{2}\.\matr{-e^8\\4\.e^2}$. The eigenvalues $\mu_i(t)$ of $(F+t\,\xi\otimes\eta)(F+t\,\xi\otimes\eta)^T$ are given by
\begin{align}
	\mu_1(t) &= \frac{e^4}{2}\, \left( 1 + 32\.t^2 + 8\.t + e^{12}\.(2\.t^2 - 2\.t + 1) + \sqrt{(32\.t^2 + 8\.t + 1 + e^{12}\.(2\.t^2 - 2\.t +1))^2 - 4\.e^{12}\.(3\.t + 1)^2} \,\right)\,,\nnl
	\mu_2(t) &= \frac{e^4}{2}\, \left( 1 + 32\.t^2 + 8\.t + e^{12}\.(2\.t^2 - 2\.t + 1) - \sqrt{(32\.t^2 + 8\.t + 1 + e^{12}\.(2\.t^2 - 2\.t +1))^2 - 4\.e^{12}\.(3\.t + 1)^2} \,\right)\,. \nonumber
\end{align}
Since $\mu_1(t)\,\mu_2(t)=[\det (F + t\,\xi\otimes\eta)]^2 = e^{20}\.(3 t+1)^2$ and $\mu_1(t)>0$ for all $t\in \mathbb{R}$, the function $h$ has the form
\begin{equation*}
	h(t) = \frac{1}{4}\, \left[ \log^2 \mu_1(t)+\log^2 \left( e^{20}\.\frac{(3t+1)^2}{\mu_1(t)} \right) \right] = \frac{1}{4}\, \left[ \log^2 \mu_1(t) + \Big( 20 + 2\.\log(3t+1) - \log\mu_1(t) \Big)^2 \right]
\end{equation*}
for $t\in(-\frac13,\frac13)$, and thus its derivatives are given by
\begin{align}
	h'(t) &= \frac14\, \left[ \frac{2\.\mu_1'(t)\cdot \log \mu_1(t)}{\mu_1(t)} + \Big( 40 + 4\.\log(3\.t+1)-2\.\log(\mu_1(t)) \Big) \cdot \left( \frac{6}{3\.t+1}-\frac{\mu_1'(t)}{\mu_1(t)} \right) \right]\,,\\
	h''(t) &= \frac14\, \Bigg[ \frac{2\.\mu_1''(t)\,\log\mu_1(t)}{\mu_1(t)} + \frac{2\.\mu_1'(t)^2}{(\mu_1(t))^2} + 2\,\left( \frac{6}{3\.t+1} - \frac{\mu_1'(t)}{\mu_1(t)} \right)^2 - \frac{2\.\mu_1'(t)^2\,\log\mu_1(t)}{(\mu_1(t))^2}\nnl
		&\qquad\quad + \Big( 40 + 4\.\log(3\.t+1)-2\.\log(\mu_1(t)) \Big) \cdot \left( \frac{\mu_1'(t)^2}{(\mu_1(t))^2} - \frac{\mu_1''(t)}{\mu_1(t)} - \frac{18}{(3\.t+1)^2} \right) \Bigg]\,.
\end{align}
In particular, since
\begin{align}
	\mu_1(0)=e^{16}\,, \qquad \mu_1'(0)=-2\, e^{16}\,, \qquad \mu_1''(0)=\frac{2\, e^{16} \left(7+2\, e^{12}\right)}{e^{12}-1}\,,
\end{align}
we find (cf.\ Fig.\ \ref{region})
\begin{align}
	h'(0)&=0,\qquad 
	h''(0)=\frac{110-2\.e^{12}}{e^{12}-1}<0\,.
\end{align}
In conclusion, for $\xi=\frac{\sqrt{2}}{2}\matr{1\\1}$, $\eta= \sqrt{2}\.\matr{4\,e^2 \\
- e^8}$ and $F=\matr{e^8&0\\0&e^2}$, the desired conditions \eqref{ident1} are satisfied. Hence, according to Lemma \ref{lemma:impossibility}, the function $W$ cannot be rank-one convex.
\end{proof}

\section{Functions depending on $\norm{\dev_n\log U}^2$}\label{devlog}

We now consider a function $W\col\GLpn\to\R$ of the form
\begin{align}\label{devf}
	W(F)&=\Psi(\norm{\dev_n\log U}^2)=\Psi(\norm{\dev_n\log V}^2)=\Psi\left(\frac{1}{n}\sum\limits_{i,j=1}^n\log^2 \frac{\mu_i}{\mu_j}\right)\,,
\end{align}
where $\mu_i$, $i=1,2,...,n$ are the singular values of $F$.

Since $\dev_n \log (U\inv) = -\dev_n \log U$ and $\dev_n (a\.\log U) = \dev_n \log U$ for $a>0$, it is easy to see that every function $W$ of the form \eqref{devf} is tension-compression symmetric and isochoric, i.e.\ satisfies
\begin{align}
	W(F)=W(F^{-1})\qquad \text{and}\qquad W(a\.F)=W(F)
\end{align}
for all $F\in\GLpn$ and all $a>0$; note that, in particular,
\begin{align}
	W(F) = W\left(\frac{F}{(\det F)^{\afrac1n}}\right) \qquad\text{ for all }\; F\in\GLp(n)\,.
\end{align}
Furthermore, in the planar case, i.e.\ for $n=2$, every objective, isotropic and isochoric energy $W\col\GLpz\to\R$ can be written in the form \eqref{devf} with a unique function $\Psi\col[0,\infty)\to\R$, and the rank-one convexity is characterized by the following result \cite{agn_martin2015rank,agn_ghiba2017SL}:
\begin{proposition}
	\label{prop:mainResultInTermsOfLogSquared}
	Let $W\col\GLpz\to\R,\;F\mapsto W(F)$ be an objective, isotropic and isochoric function and let $\Psi\col[0,\infty)\to\R$ denote the uniquely determined functions with
	\[
	W(F)=\Psi(\norm{\dev_2\log U}^2)
	\]
	for all $F\in\GLpz$ with singular values $\lambda_1,\lambda_2$. If $\Psi\in C^2([0,\infty))$, then the following are equivalent:
	\begin{itemize}
		\item[i)] $W$ is polyconvex,
		\item[ii)] $W$ is rank-one convex,
		\item[iii)] $2\,\eta\,\Psi^{\prime\prime}(\eta)+ (1-\sqrt{2\,\eta})\,\Psi^{\prime}(\eta)\geq 0$ \quad for all $\eta\in(0,\infty)$.
	\end{itemize}
\end{proposition}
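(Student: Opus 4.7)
The plan is to exploit the sharp structural simplifications available in the planar case $n=2$. Since $W$ is objective, isotropic and isochoric, the symmetric function $g(\lambda_1,\lambda_2)$ representing $W$ in singular values is invariant under $(\lambda_1,\lambda_2)\mapsto(a\lambda_1,a\lambda_2)$ and hence depends only on $t := \log(\lambda_1/\lambda_2)$. Setting $\widetilde{\Psi}(t) := \Psi(t^2/2)$, one has $W(F) = \widetilde{\Psi}(t)$, and a short chain-rule computation transcribes condition iii) into the equivalent cleaner statement
\[
\widetilde{\Psi}''(t) \;\geq\; \widetilde{\Psi}'(t) \qquad \text{for all } t \in (0,\infty)\,,
\]
which will be the common ground for both non-trivial implications.

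The implication i) $\Rightarrow$ ii) is Remark \ref{remark:polyImpliesRankOne}, so only ii) $\Rightarrow$ iii) and iii) $\Rightarrow$ i) remain. For ii) $\Rightarrow$ iii) I would apply the Knowles--Sternberg characterization of rank-one convexity for isotropic functions on $\GLp(2)$ expressed through $g$: the Baker--Ericksen inequalities, the separate convexity conditions and the tension-compression condition of Knowles--Sternberg are mostly automatic from the symmetry of $g$ and the isochoric scaling $g(a\lambda_1,a\lambda_2) = g(\lambda_1,\lambda_2)$, and the single remaining non-trivial Knowles--Sternberg inequality, after applying the chain rule to $g(\lambda_1,\lambda_2) = \widetilde{\Psi}(\log(\lambda_1/\lambda_2))$, simplifies precisely to the reformulated iii). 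The converse iii) $\Rightarrow$ ii) runs the same chain-rule computation backwards, using that the forced monotonicity $\widetilde{\Psi}' \geq 0$ (which in any case follows from Lemma \ref{lemma:monotonicityDev}) ensures the remaining Knowles--Sternberg conditions.

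For iii) $\Rightarrow$ i) I would construct an explicit polyconvex representation of $W$ in the minors $(F,\det F)\in\R^{4}\times\R$. The natural polyconvex base is the linear distortion $K(F) := \norm{F}^2/(2\det F)$, which equals $\cosh t$ in singular values and is polyconvex as the restriction to $\GLp(2)$ of the jointly convex map $(A,d)\mapsto \norm{A}^2/(2d)$ on $\R^{4}\times(0,\infty)$. However, the naive chain rule for $\Phi\circ K$ with $\Phi(\cosh t) = \widetilde{\Psi}(t)$ produces the convexity requirement $\widetilde{\Psi}''(t)\sinh t \geq \widetilde{\Psi}'(t)\cosh t$, which near $t=0$ is strictly stronger than iii); the fix is to use the refined polyconvex construction developed in \cite{agn_martin2015rank,agn_ghiba2017SL}, where the polyconvex representation combines $K$ with suitable auxiliary polyconvex ingredients so that the resulting one-variable convexity condition on the base function matches iii) exactly. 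Polyconvexity on the full matrix space in the sense of Definition \ref{definition:polyconvexityGLSL} i) then follows by extending to $+\infty$ on $\{\det F \leq 0\}$ and a standard lower-semicontinuity check at the boundary $\det F \to 0^+$.

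The principal technical difficulty is iii) $\Rightarrow$ i): identifying the correct polyconvex coordinate so that convexity of the one-variable profile matches iii) \emph{exactly} (rather than merely implies it, as the naive choice $K$ does) is the crux of the argument. The scaling mismatch between $\cosh t$ and the logarithmic variable $t$ itself is the essential source of the discrepancy, and the key algebraic input is the precise construction of \cite{agn_martin2015rank,agn_ghiba2017SL}, which I would transcribe and verify rather than reproduce from scratch.
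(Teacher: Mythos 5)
The paper does not actually prove Proposition \ref{prop:mainResultInTermsOfLogSquared}: it quotes the result and refers to \cite{agn_martin2015rank,agn_ghiba2017SL} for the proof. So the relevant question is whether your sketch would stand on its own. Your preparatory reduction is correct: with $t=\log(\lambda_1/\lambda_2)$ one has $\norm{\dev_2\log U}^2=t^2/2$, and substituting $\eta=t^2/2$ into iii) does yield exactly $\widetilde{\Psi}''(t)\ge\widetilde{\Psi}'(t)$ for $t>0$. The implication i)~$\Rightarrow$~ii) is standard, and ii)~$\Rightarrow$~iii) goes through as you say; in fact it is even easier than invoking the full Knowles--Sternberg list, since separate convexity of $g(\lambda_1,\lambda_2)=\widetilde{\Psi}(\log\lambda_1-\log\lambda_2)$ in $\lambda_1$ (a direct consequence of rank-one convexity along $e_1\otimes e_1$ at diagonal $F$) gives $g_{11}=\bigl(\widetilde{\Psi}''(t)-\widetilde{\Psi}'(t)\bigr)/\lambda_1^2\ge0$, which for $t>0$ is precisely iii); the Baker--Ericksen inequality supplies the sign $\widetilde{\Psi}'(t)\ge0$ needed to dispose of the remaining conditions.

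The genuine gap is iii)~$\Rightarrow$~i), which is the entire content of the proposition (everything else is routine). You correctly diagnose that the obvious polyconvex coordinate $K(F)=\norm{F}^2/(2\det F)=\cosh t$ fails, since convexity and monotonicity of $\Phi$ with $\Phi(\cosh t)=\widetilde{\Psi}(t)$ requires $\widetilde{\Psi}''(t)\ge\coth(t)\,\widetilde{\Psi}'(t)$, which is strictly stronger than iii) near $t=0$. But at exactly this point you write that you would ``transcribe and verify'' the refined construction of \cite{agn_martin2015rank,agn_ghiba2017SL} --- that is, you cite the theorem you are supposed to be proving. The construction in those references is not a one-line fix (it does not consist of composing a single polyconvex invariant with a convex monotone function; rather, for each matrix one must exhibit a polyconvex minorant touching $W$ there, or equivalently represent $W$ as a supremum of a carefully built family of polyconvex functions), and nothing in your sketch indicates how it works. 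As submitted, the proposal establishes i)~$\Leftrightarrow$~ii)~$\Rightarrow$~iii) modulo routine checks but leaves the converse, and hence the proposition, unproved.
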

\noindent
For example, the energy $W\col\GLpz\to\R$ with $W(F)=e^{k\norm{\dev_2\log U}^2}$ is polyconvex for $k\geq \frac{1}{4}$.

In the three-dimensional case, however, not every function $W$ of the form $W(F)=\Psi(\norm{\dev_3\log V}^2)$ such that $\Psi$ satisfies condition iii) in Proposition \ref{prop:mainResultInTermsOfLogSquared} is polyconvex or even rank-one convex (e.g.\ the mapping $F\mapsto e^{k\norm{\dev_3\log V}^2}$). In fact, there exists no strictly monotone function $\Psi$ such that $W$ is rank-one convex, as the following result shows.

\begin{proposition}\label{notrankonedev}
	For $n\geq3$, there is no strictly monotone function $\Psi\col[0,\infty)\to\mathbb{R}$ such that
	\begin{align}\label{fde2}
	F\mapsto W(F)=\Psi(\norm{\dev_n\log V}^2)
	\end{align}
	is rank-one convex in $\GLp(n)$.
\end{proposition}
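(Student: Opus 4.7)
The plan is to apply Lemma \ref{lemma:impossibility} to $\omega(F) = \norm{\dev_n\log U}^2$, exactly as in the proof of Proposition \ref{thlog}. The key simplification is based on the identity $\norm{\dev_n\log U}^2 = \norm{\log U}^2 - \tfrac{1}{n}(\log\det F)^2$ combined with the rank-one determinant formula $\det(F+t\,\xi\otimes\eta) = \det F\cdot (1+t\,\xi^T F^{-T}\eta)$: if the rank-one direction satisfies $\xi^T F^{-T}\eta = 0$, then $\det$ is preserved identically in $t$ along the perturbation, so that along this line
\[
\norm{\dev_n\log U(F+t\,\xi\otimes\eta)}^2 \;=\; \norm{\log U(F+t\,\xi\otimes\eta)}^2 \,-\, \tfrac{1}{n}(\log\det F)^2\,,
\]
the second summand being constant in $t$. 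Consequently, the first and second derivatives of $\omega_{\rm dev}$ and of $\omega_{\log}\colonequals\norm{\log U}^2$ in the direction $\xi\otimes\eta$ coincide, and it suffices to produce $F\in\GLpn$ and $\xi,\eta\in\R^n\setminus\{0\}$ such that
\[
\xi^T F^{-T}\eta=0\,,\qquad \langle(\log V)\,\xi,\,F^{-T}\eta\rangle=0\,,\qquad D_F^2(\norm{\log U}^2).(\xi\otimes\eta,\xi\otimes\eta)<0\,.
\]
Note that the rank-one direction used in the proof of Proposition \ref{thlog} does \emph{not} satisfy the first (isochoric) condition (there one finds $\xi^T F^{-T}\eta = 3\neq 0$), so a new construction is required; this is where the hypothesis $n\geq 3$ becomes essential, since in the planar deviatoric case Proposition \ref{prop:mainResultInTermsOfLogSquared} shows that no such example can exist.

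I would first settle the case $n=3$ and then reduce $n>3$ to it by embedding $F=\mathrm{diag}(F_0,I_{n-3})$ with $\xi,\eta$ supported in the first three coordinates. A direct computation gives
\[
\norm{\dev_n\log U(F+t\,\xi\otimes\eta)}^2 \;=\; \norm{\log U_0(t)}^2 \,-\, \tfrac{1}{n}\bigl(\log\det F_0(t)\bigr)^2\,,
\]
so the three-dimensional isochoric constraint $\xi_0^T F_0^{-T}\eta_0=0$ keeps $\det F_0$ fixed and triggers exactly the same reduction. The problem is thus truly one about $\GLp(3)$.

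For the three-dimensional construction, I would take $F_0=\mathrm{diag}(\lambda_1,\lambda_2,\lambda_3)$ with distinct positive $\lambda_i$ and $\xi=\tfrac{1}{\sqrt{3}}(1,1,1)^T$. Setting $c_i=\eta_i/\lambda_i$, the two linear constraints reduce to
\[
\sum_{i=1}^3 c_i \;=\; 0\,, \qquad \sum_{i=1}^3 c_i\log\lambda_i \;=\; 0\,,
\]
which cut out a one-parameter family of admissible $\eta$ whenever $\log\lambda_1,\log\lambda_2,\log\lambda_3$ are not all equal. With this one-parameter family in hand, what remains is to compute the sign of
\[
h''(0)\,,\qquad h(t) \;=\; \tfrac{1}{4}\,\norm{\log\bigl((F_0+t\,\xi\otimes\eta)(F_0+t\,\xi\otimes\eta)^T\bigr)}^2\,.
\]
The main obstacle is this final sign determination: unlike in the planar proof of Proposition \ref{thlog}, the eigenvalues of the symmetric positive matrix $(F_0+t\,\xi\otimes\eta)(F_0+t\,\xi\otimes\eta)^T$ are roots of a cubic without a convenient closed form. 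In practice I would either Taylor-expand the characteristic polynomial around $t=0$ (using that $F_0F_0^T$ is already diagonal, so standard second-order eigenvalue perturbation applies directly) or verify $h''(0)<0$ for a concrete numerical choice of $(\lambda_1,\lambda_2,\lambda_3)$ via a computer algebra system. Once such an explicit example is exhibited, Lemma \ref{lemma:impossibility} closes the case $n=3$, and the block embedding closes every case $n>3$.
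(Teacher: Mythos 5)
Your reduction is sound and takes a genuinely different route from the paper's proof of this proposition. You impose the additional constraint $\langle \xi, F^{-T}\eta\rangle=0$, which (by $\det(F+t\,\xi\otimes\eta)=\det F\,(1+t\,\langle\xi,F^{-T}\eta\rangle)$) freezes the determinant along the rank-one line, so that $\norm{\dev_n\log U}^2$ and $\norm{\log U}^2$ differ by a constant there and Lemma \ref{lemma:impossibility} can be applied to the simpler measure; your identity $\norm{\dev_n\log U}^2=\norm{\log U}^2-\tfrac1n(\log\det F)^2$ and the resulting two linear conditions $\sum_i c_i=0$, $\sum_i c_i\log\lambda_i=0$ are all correct, as is the block-diagonal reduction of $n>3$ to $n=3$. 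The paper instead keeps only the single condition $\langle(\dev_3\log V)\xi,F^{-T}\eta\rangle=0$, parametrizes its solutions by a rotation angle $\theta$ about $\vartheta=(\dev_3\log V)\xi/\norm{(\dev_3\log V)\xi}$, and chooses $F=\mathrm{diag}(1,e^{20},e^{15})$ with $\xi,\eta$ supported in the last two coordinates, so that one eigenvalue is identically $1$ and the other two have closed forms as roots of a quadratic. Your two orthogonality conditions are precisely those realized by the choice $\eta=F^{T}\{\xi\times[(\dev_3\log V)\xi]\}$ in the paper's Section \ref{volisosect}; completed, your argument would therefore deliver the volumetric--isochoric impossibility theorem there in the same stroke, which is a genuine structural advantage over the paper's Proposition \ref{notrankonedev} proof.

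The gap is that you never exhibit a member of your admissible family with $h''(0)<0$; you only describe how you \emph{would} find one (perturbation expansion or a computer algebra check). That final sign is the entire nontrivial content of the counterexample, and it is not automatic here: your isochoric constraint is an \emph{extra} condition beyond what Lemma \ref{lemma:impossibility} requires, so it strictly shrinks the set of candidate directions, and the existence of a concave critical direction inside this smaller set does not follow from the non-ellipticity result for $\norm{\log U}^2$ (Proposition \ref{thlog}), whose witness direction you correctly note violates $\langle\xi,F^{-T}\eta\rangle=0$. The plan does succeed --- the computation in Section \ref{volisosect} with $F_0=\mathrm{diag}(1,e^{20},e^{10})$, $\xi=(\tfrac{\sqrt3}{2}\sin\alpha,\tfrac{\sqrt3}{2}\cos\alpha,\tfrac12)^T$ and $\eta=F_0^{T}\{\xi\times[(\dev_3\log V_0)\xi]\}$ yields $h''(0)\approx-6.7<0$ and sits exactly in your constrained family --- but as written your proof stops short at precisely its decisive step, and an explicit $(F,\xi,\eta)$ with the verified inequality must be supplied before the argument is complete.
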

\begin{remark}
	According to Remark \ref{remark:strictMonotonicityConditionLinearCompatibilityDev}, for a sufficiently smooth function $W$ on $\GLp(3)$, the condition of strict monotonicity can be replaced by the requirement that $W$ is compatible with linear elasticity.
\end{remark}

\begin{proof}
Without loss of generality, we consider only the case $n=3$, since the result may be extended to arbitrary dimension $n\geq3$ by a suitable restriction. The idea of the proof is similar to that of Proposition \ref{thlog}, i.e.\ we need to find $F\in\GLpn$ and $\xi,\eta\in\mathbb{R}^n\setminus \{0\}$ such that 
\begin{align}\label{ident2}
	D_F(\norm{\dev_3\log U}^2).(\xi\otimes\eta)=0 \qquad \text{and}\qquad D_F^2(\norm{\dev_3\log U}^2).(\xi\otimes\eta,\xi\otimes\eta)<0\,.
\end{align}
Since
\begin{align}
\label{eq:firstDerivativeDevLog}
	D_F(\norm{\dev_3\log U}^2).(\xi\otimes\eta) = \langle 2\,(\dev_3\log V)\, F^{-T}, \xi\otimes \eta\rangle = 2\, \langle (\dev_3\log V)\.\xi,\, F^{-T}\eta \rangle\,,
\end{align}
conditions \eqref{ident2} can be restated as
\begin{align}\label{dasp}
	\langle (\dev_3\log V)\, \xi, F^{-T}\, \eta\rangle=0
	\qquad 
	\text{and}
	\qquad 
	D_F^2(\norm{\dev_3\log V}^2).(\xi\otimes\eta,\xi\otimes\eta)<0.
\end{align}
 
 For given fixed $F\in \GLp(3)$ and $\xi\in \mathbb{R}^3$ such that $(\dev_3\log V)\,\xi\neq 0$, a solution $\eta\in \mathbb{R}^3$ of equation \eqref{dasp}$_1$ is given by
\begin{align}\label{m0}
F^{-T}\eta=\begin{pmatrix}
0&1&0\\
-1&0&0\\
0&0&0
\end{pmatrix}\,\vartheta \;\equalscolon\; m_0, \qquad \vartheta=\frac{(\dev_3\log V)\, \xi}{\norm{(\dev_3\log V)\,\xi}} \,.
\end{align}
\begin{figure}[b!]
	\begin{minipage}[h]{1\linewidth}
		\centering
		\includegraphics[scale=0.20,angle =0]{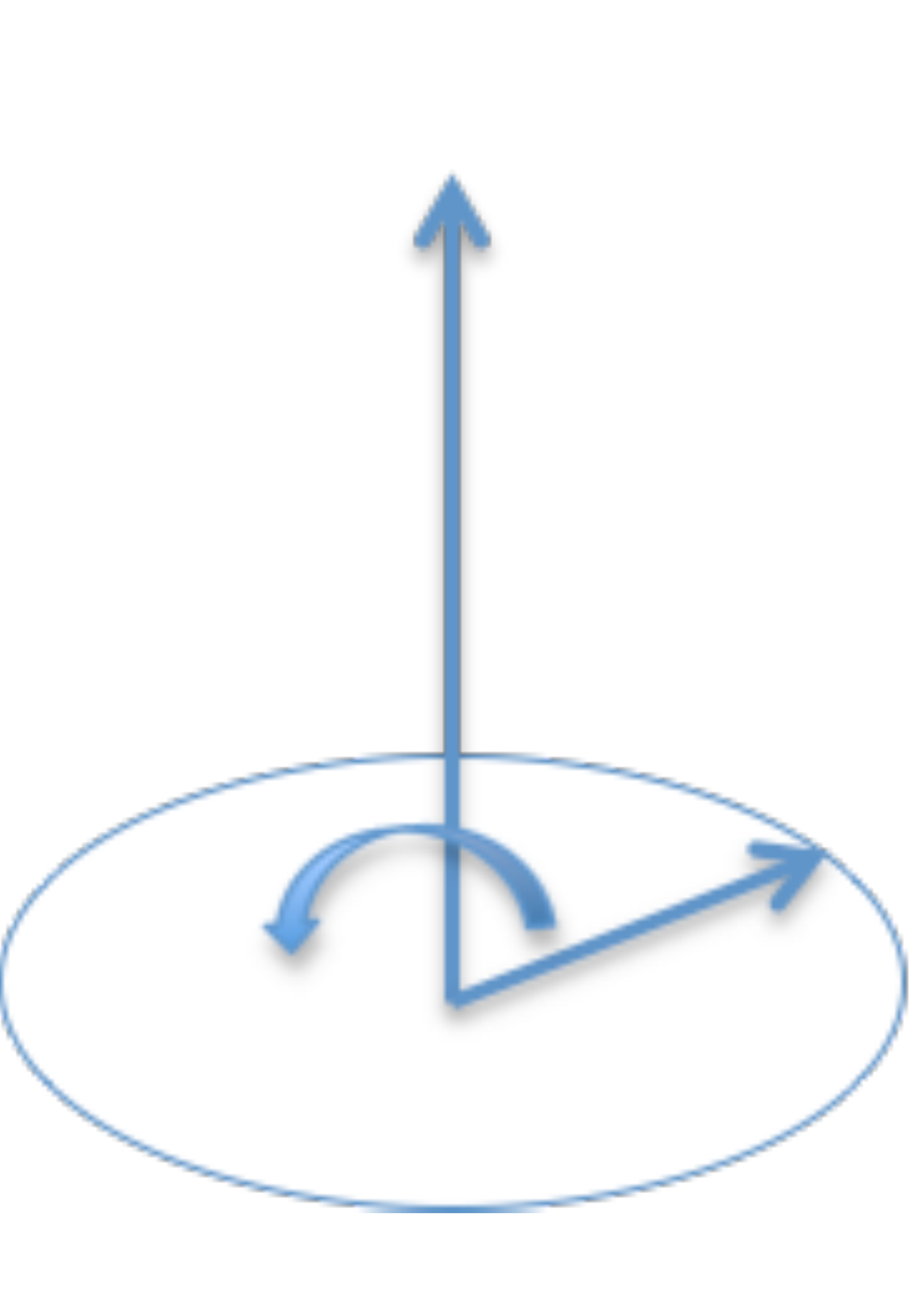}
		\put(-49,119){$\vartheta=\frac{(\dev_3\log V)\, \xi}{\norm{(\dev_3\log V)\,\xi}}$}
		\put(-70,50){$\theta$}	
		\put(-10,60){$m_0=F^{-T}\eta$}	
		\centering
		\caption{\footnotesize{Construction of the counterexample.} }\label{rotire}
	\end{minipage}
\end{figure}

\noindent
More generally, any $m\in \mathbb{R}$ obtained by arbitrary rotation $Q(\vartheta,\theta)$, $\theta\in [0,2\, \pi)$ of $m_0$ (as given in \eqref{m0}) around $\vartheta$ provides a solution of \eqref{dasp}$_1$, i.e.\ for given fixed $F\in \GLp(3)$ and $\xi\in \mathbb{R}^3$, any $\eta\in \mathbb{R}^3$ given by
\begin{align}
F^{-T}\eta&=m=Q(\vartheta,\theta)\,m_0= Q(\vartheta,\theta)\,\begin{pmatrix}
0&1&0\\
-1&0&0\\
0&0&0
\end{pmatrix}\vartheta\,, \notag\\
\implies\quad &\eta=F^{T} Q(\vartheta,\theta)\,\begin{pmatrix}
0&1&0\\
-1&0&0\\
0&0&0
\end{pmatrix}\,\frac{(\dev_3\log V)\, \xi}{\norm{(\dev_3\log V)\,\xi}} \label{eq:orthogonalEta}
\end{align}
is a solution to equation \eqref{dasp}$_1$. Recall that for a unit vector $\vartheta=(\vartheta_1,\vartheta_2,\vartheta_3)^T$, the matrix for a rotation by an angle $\theta\in[0,2\,\pi)$ about the axis $\vartheta$ is given by
\begin{align} Q(\vartheta,\theta)&=\begin{pmatrix} 
	\cos \theta +\vartheta_1^2 \left(1-\cos \theta\right) & \vartheta_1 \vartheta_2 \left(1-\cos \theta\right) - \vartheta_3 \sin \theta & \vartheta_1 \vartheta_3 \left(1-\cos \theta\right) + \vartheta_2 \sin \theta \\
	\vartheta_2 \vartheta_1 \left(1-\cos \theta\right) + \vartheta_3 \sin \theta & \cos \theta + \vartheta_2^2\left(1-\cos \theta\right) & \vartheta_2 \vartheta_3 \left(1-\cos \theta\right) - \vartheta_1 \sin \theta \\ 
	\vartheta_3 \vartheta_1 \left(1-\cos \theta\right) - \vartheta_2 \sin \theta & \vartheta_3 \vartheta_2 \left(1-\cos \theta\right) + \vartheta_1 \sin \theta & \cos \theta + \vartheta_3^2\left(1-\cos \theta\right)
	\end{pmatrix} \notag\\
	&= \cos\theta\, \id_3 + \sin\theta\,{\rm anti}(\vartheta) + (1-\cos\theta)\,{\vartheta}\otimes{\vartheta}\,, \label{eq:rotationMatrix}
\end{align}
where
\begin{align} {\rm anti}(\vartheta)= \begin{pmatrix}
	0 & -\vartheta_3 & \vartheta_2 \\[3pt]
	\vartheta_3 & 0 & -\vartheta_1 \\[3pt]
	-\vartheta_2 & \vartheta_1 & 0
	\end{pmatrix}
\end{align}
is the cross-product matrix of $\vartheta$.

Again, computing $D_F^2(\norm{\dev_3\log V}^2).(\xi\otimes\eta,\xi\otimes\eta)$ explicitly is rather inconvenient. We therefore introduce the function $h\col(-\varepsilon,\varepsilon)\to \mathbb{R}_+$ given by
\begin{align}
	h(t)&=\norm{\dev_n\log \sqrt{(F+t\,\xi\otimes\eta)(F+t\,\xi\otimes\eta)^T}}^2=
	\frac{1}{4}\norm{\dev_3\log (F+t\,\xi\otimes\eta)(F+t\,\xi\otimes\eta)^T}^2\notag\\
	&=\frac{1}{12}\left(\log^2 \frac{\mu_1}{\mu_2}+\log^2 \frac{\mu_2}{\mu_3}+\log^2 \frac{\mu_3}{\mu_1}\right)\,, \label{eq:devLogRatios}
\end{align}
where
$\mu_i(t)$ , $i=1,2,3$ are the eigenvalues of $(F+t\,\xi\otimes\eta)(F+t\,\xi\otimes\eta)^T$ and $\varepsilon>0$ is small enough such that $\mu_i(t)>0$, $i=1,2,...,n$ for all $t\in(-\varepsilon,\varepsilon)$. Then
\begin{align}
	h'(t)&=
	\frac{1}{4}\langle D_F (\norm{\dev_n\log (F+t\,\xi\otimes\eta)(F+t\,\xi\otimes\eta)^T}^2),\xi \otimes \eta\rangle\,,\notag\\
	h''(t)&=
	\frac{1}{4} D^2_F (\norm{\dev_n\log (F+t\,\xi\otimes\eta)(F+t\,\xi\otimes\eta)^T}^2). (\xi \otimes \eta,\xi \otimes \eta) \label{eq:hDevLogSecondDerivative}
\end{align}
and thus
\begin{align}
	h'(0)&=
	\frac{1}{4}\langle D_F (\norm{\dev_n\log { F\,F^T}}^2),\xi \otimes \eta\rangle\,,\qquad 
	h''(0)=
	\frac{1}{4} D^2_F (\norm{\dev_n\log { F\,F^T}}^2). (\xi \otimes \eta,\xi \otimes \eta)\,. \label{eq:hDevLogSecondDerivativeZero}
\end{align}
Due to \eqref{eq:orthogonalEta} and \eqref{eq:hDevLogSecondDerivativeZero}, %
in order to fulfil \eqref{dasp}, it is sufficient to find $F\in\GLp(3)$, $\xi\in\R^3$ and $\theta\in[0,2\pi)$ such that %
\begin{align}
	\label{ineq223}
	h''(0) < 0
	\qquad \text{for}\qquad 
	\eta&=F^{T} Q(\vartheta,\theta)\,\begin{pmatrix}
	0&1&0\\
	-1&0&0\\
	0&0&0
	\end{pmatrix}\frac{(\dev_3\log V)\, \xi}{\norm{(\dev_3\log V)\,\xi}}\,,
\end{align}
Let
\begin{align}
	\xi=\left(
	\begin{array}{c}
	0 \\
	\frac{1}{\sqrt{2}} \\
	\frac{1}{\sqrt{2}} \\
	\end{array}
	\right)\,,
	\qquad 
	F=\begin{pmatrix}
	1&0&0\\
	0&e^{20}&0\\
	0&0& e^{15}
	\end{pmatrix}\quad \text{and}\quad 
	\qquad
	\theta=\frac{\pi}{2}\,.
\end{align}
Then 
\begin{equation*}
	\vartheta \;=\; \frac{(\dev_3\log V)\, \xi}{\norm{(\dev_3\log V)\,\xi}} \;=\; \frac{1}{\sqrt{29}}\,\left(\begin{array}{c}0\\5\\2\end{array}\right)\,,
	\qquad
	Q(\vartheta,\theta) \;=\; {\rm anti}(\vartheta)+\vartheta\otimes \vartheta \;=\; \frac{1}{\sqrt{29}}\,\left(\begin{array}{ccc}0 & -2 & 5 \\ 2 & 25 & 10 \\ -5 & 10 & 4\end{array}\right)
\end{equation*}
and
\begin{equation}
	\eta \;=\; F^{T} Q(\vartheta,\theta)\,\begin{pmatrix}0&1&0\\-1&0&0\\0&0&0\end{pmatrix}\,\frac{(\dev_3\log V)\, \xi}{\norm{(\dev_3\log V)\,\xi}} \;=\; \frac{1}{29}\left(
	\begin{array}{c}0\\10\, e^{20}\\-25\, e^{15}\end{array}\right)\,.
\end{equation}
For these choices and for $t\in \left(-\frac{29 \sqrt{2}}{15},\frac{29 \sqrt{2}}{15}\right)$, we directly compute
\begin{align}
	\mu_1(t)&=1,\notag\\
	\mu_2(t)&=\frac{1}{1682}\Bigg[e^{40} \left(100\, t^2+290\, \sqrt{2}\, t+841\right)+e^{30} \left(625\, t^2-725\, \sqrt{2}\, t+841\right)\\& -\frac{1}{2} \sqrt{4\, e^{60}\, \left(e^{10} \left(100\, t^2+290\, \sqrt{2} t+841\right)+625\, t^2-725 \,\sqrt{2} t+841\right)^2-6728\, e^{70} \left(15 \,t-29\, \sqrt{2}\right)^2}\Bigg],\notag\\
	\mu_3(t)&=\frac{1}{1682}\Bigg[e^{40} \left(100\, t^2+290\, \sqrt{2}\, t+841\right)+e^{30} \left(625\, t^2-725\, \sqrt{2}\, t+841\right)\notag\\& +\frac{1}{2} \sqrt{4\, e^{60}\, \left(e^{10} \left(100\, t^2+290\, \sqrt{2} t+841\right)+625\, t^2-725 \,\sqrt{2} t+841\right)^2-6728\, e^{70} \left(15 \,t-29\, \sqrt{2}\right)^2}\Bigg]\,.\notag
\end{align}
Since $\mu_2(t)\,\mu_3(t)=[\det (F + t\,\xi\otimes\eta)]^2=\frac{e^{70} \left(15 \,t-29 \sqrt{2}\right)^2}{1682}$, and due to \eqref{eq:devLogRatios}, the function $h$ is given by
\begin{align}
	h(t)=\frac{1}{12} \left[\log^2 (\mu_3(t))+\log^2\left(\mu_3(t)\,\frac{1682}{e^{70} \left(15\, t-29 \sqrt{2}\right)^2}\right)+ \log^2\left(\mu_3^2(t)\,\frac{1682}{e^{70} \left(15\, t-29 \sqrt{2}\right)^2}\right)\right]\,,
\end{align}
and thus
\begin{align}
	h'(t)&=\frac{1}{6 \left(29 \,\sqrt{2}-15\, t\right) \mu_3(t)}\Bigg[3 \left(29 \,\sqrt{2}-15 \,t\right) \mu_3'(t) \log \frac{1682\, \mu_3^2(t)}{e^{70} \left(29\, \sqrt{2}-15\, t\right)^2}\notag\\&\qquad\qquad\qquad\qquad\qquad\qquad+30\, \mu_3(t) \log \frac{1682^2 \mu_3^3(t)}{e^{140} \left(29 \,\sqrt{2}-15\, t\right)^4}\Bigg]\,,\notag\\
	h''(t)&=\frac{1}{2 \left(29\sqrt{2}-15 \,t\right)^2 \mu_3^2(t)}\Bigg[-\left(29 \sqrt{2}-15\, t\right)^2 (\mu_3'(t))^2 \left(\log \frac{1682 \,\mu_3^2(t)}{e^{70} \left(29 \sqrt{2}-15\, t\right)^2}-2\right)\\&\qquad\qquad\qquad\qquad\qquad\qquad + \left(29 \sqrt{2}-15\, t\right)^2 \mu_3(t)\,\mu_3''(t) \log \frac{1682\, \mu_3^2(t)}{e^{70} \left(29 \sqrt{2}-15\, t\right)^2}\notag\\&\qquad\qquad\qquad\qquad\qquad\qquad+60 \left(29 \sqrt{2}-15\, t\right) \mu_3(t)\,\mu_3'(t)+150\,\mu_3^2(t) \left(\log \frac{1682^2 \mu_3^3(t)}{e^{140} \left(29 \sqrt{2}-15\, t\right)^4}+4\right)\Bigg]\,.\notag
\end{align}
At $t=0$, we find
\begin{align}
h'(0)&=\frac{1}{174 \sqrt{2}\, \mu_3(0)}\Big[87 \,\sqrt{2} \,\mu_3'(0) \log \frac{\mu_3^2(0)}{e^{70}}+30\, \mu_3(0) \,\log \frac{\mu_3^3(0)}{e^{140}}\Big],\notag\\
h''(0)&=\frac{1}{3364 \,\mu_3^2(0)}\Big[-1682\, (\mu_3'(0))^2 \left(\log \frac{\mu_3^2(0)}{e^{70}}-2\right)+ 1682\,\mu_3(0)\,\mu_3''(0) \log \frac{\mu_3^2(0)}{e^{70}}\\&\qquad\qquad\qquad\quad+1740 \sqrt{2}\, \mu_3(0)\,\mu_3'(0)+150\, \mu_3^2(0) \left(\log \frac{\mu_3^3(0)}{e^{140}}+4\right)\Big]\,\notag
\end{align}
and since $\mu_3(0)=e^{40}$, $\mu_3'(0)=\frac{10\, \sqrt{2}\, e^{40}}{29}$, $\mu_3''(0)=\frac{25\, \left(e^{40}+8\, e^{50}\right)}{841 \,\left(e^{10}-1\right)}$, we finally obtain (cf.\ Fig.\ \ref{mdev2})
\begin{align}
h'(0)=0\,,\qquad 
h''(0)=-\frac{25 \left(4\, e^{10}-49\right)}{841 \left(e^{10}-1\right)}<0,
\end{align}
completing the proof of the non-rank-one-convexity of $W$.
\begin{figure}[h!]
	\begin{minipage}[h]{1\linewidth}
		\centering
		\includegraphics[scale=0.6,angle =0]{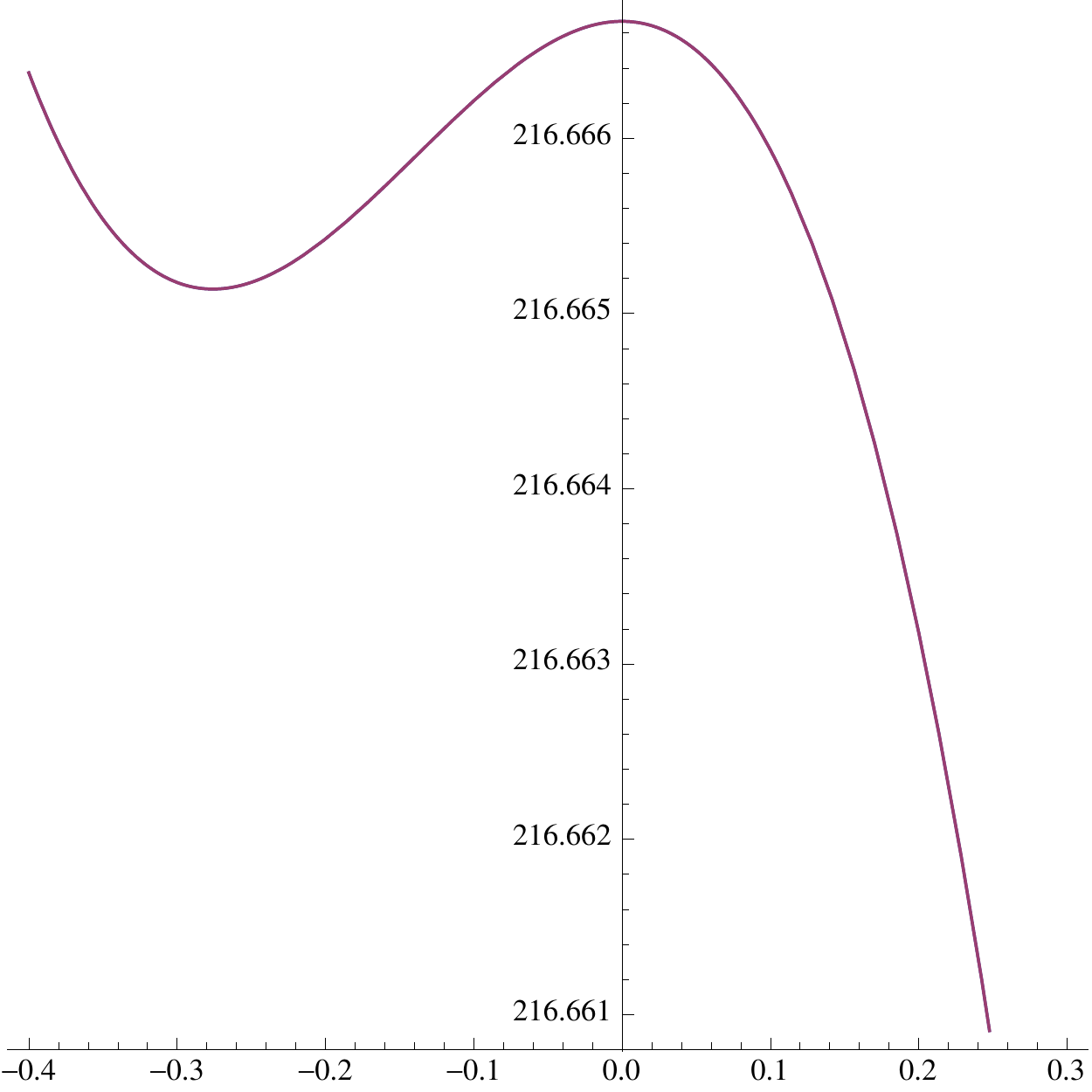}	
		\centering
		\caption{\footnotesize{Graphical representation of $h$, clearly showing the concave critical point at $t=0$.} }\label{mdev2}
	\end{minipage}
\end{figure}
\end{proof}

\subsection{Isochoric, tension-compression symmetric energies}
\label{section:isochoricTCsymmetricEnergies}
For $n\geq3$, not every isotropic energy on $\GLpn$ which is isochoric as well as tension-compression symmetric, i.e.\ satisfies $W(F) = W(F^{-1}=W(a\.F))$ for all $F\in\GLpn$ and all $a>0$, can be represented in terms of $\norm{\dev_n\log U}^2$ alone. An example of such a function which cannot be written in the form $W(F)=\Psi(\norm{\dev_n \log U}^2)$ is given by $W\col\mathrm{GL}^+(3)\to\mathbb{R}$ with
\[
	W(F) = [\det(\dev_3 \log U)]^2\,.
\]
It is straightforward to check that $W$ is isochoric and tension-compression symmetric. Furthermore, for
\begin{equation}\label{eq:isochoricRepresentationCounterexamples}
	U_1 = \matr{e&0& 0\\0&e&0\\0&0&e^{-2}}
	\qquad\text{ and }\qquad
	U_2 = \matr{e^{\sqrt{3}}&0& 0\\0&1&0\\0&0&e^{-\sqrt{3}}}\,,
\end{equation}
we find
\[
	W(U_1) = \left[\det \matr{1&0& 0\\0&1&0\\0&0&{-2}}\right]^2 = 4 \neq 0 = \left[\det \matr{{\sqrt{3}}&0& 0\\0&0&0\\0&0&{-\sqrt{3}}}\right]^2 = W(U_2)\,.
\]
However, since
\[
	\norm{\dev_3 \log U_1}^2 = \left\lVert\dev_3 \matr{1&0& 0\\0&1&0\\0&0&{-2}}\right\rVert^2 = 6 = \left\lVert\dev_3 \matr{{\sqrt{3}}&0& 0\\0&0&0\\0&0&{-\sqrt{3}}}\right\rVert^2 = \norm{\dev_3 \log U_2}^2\,,
\]
the equality $\Psi(\norm{\dev_3 \log U_1}^2)=\Psi(\norm{\dev_3 \log U_2}^2)$ must hold for all functions $\Psi\col[0,\infty)\to\R$. Therefore $W$ cannot be expressed in the form $W(F)=\Psi(\norm{\dev_3 \log U}^2)$.
\bigskip

Since our attempts to find a function $\Psi$ such that $F\mapsto\Psi(\norm{\dev_3 \log U}^2)$ is rank-one on $\GLp(3)$ turned out to be in vain, we considered the possibility that a tension-compression symmetric and isochoric elastic energy on $\GLp(3)$ cannot be rank-one convex in general. However, as the following example demonstrates, this assumption turned out to be false, showing that the non-ellipticity of \eqref{devf} for $n=3$ is a more particular drawback of the logarithmic formulation alone.

Consider the invariants $\Ihat_1,\Ihat_2,\Ihat_3\col\GLp(3)\to\mathbb{R}$ defined by
\begin{equation}\label{eq:isochoricInvariants}
	\Ihat_1(F) = \frac{\lambda_1^2}{\lambda_2\.\lambda_3}\,,\qquad\qquad
	\Ihat_2(F) = \frac{\lambda_1\.\lambda_2}{\lambda_3^2}\,,\qquad\qquad
	\Ihat_3(F) = \lambda_1\.\lambda_2\.\lambda_3
\end{equation}
for $F\in\GLp(3)$ with (ordered) singular values $\lambda_1\geq\lambda_2\geq\lambda_3$. Then $\Ihat_1$ and $\Ihat_2$ are isochoric, i.e.\ $\Ihat_i(a\.F)=\Ihat_i(F)$ for all $a>0$ and $i\in\{1,2\}$. Furthermore,\footnote{Note that the largest singular value of $F\inv$ is $\frac{1}{\lambda_3}$.}
\[
	\Ihat_1(F^{\inv}) = \frac{\lambda_3^{-2}}{\lambda_2^{-1}\.\lambda_1^{-1}} = \frac{\lambda_1\.\lambda_2}{\lambda_3^2} = \Ihat_2(F)
\]
and
\[
	\Ihat_2(F\inv) = \Ihat_1((F\inv)\inv) = \Ihat_1(F)\,.
\]

\begin{lemma}
	\label{lemma:multiplicativeInvariantsPolyconvexity}
	The functions $\Ihat_1$, $\Ihat_2$ and $\Ihat_3$ are polyconvex.
\end{lemma}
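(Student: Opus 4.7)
The plan is to express each $\Ihat_i$ explicitly as a convex function of the minor tuple $(F,\Cof F,\det F)$. For $\Ihat_3$ there is nothing to do, since $\Ihat_3(F)=\lambda_1\lambda_2\lambda_3=\det F$ is already the top-order minor, hence linear (and so polyconvex). For $\Ihat_1$ and $\Ihat_2$ the key is to eliminate the offending singular values in the denominator using $\lambda_1\lambda_2\lambda_3=\det F$, yielding
\[
	\Ihat_1(F) \;=\; \frac{\lambda_1^{\,3}}{\det F} \;=\; \frac{\norm{F}_{\mathrm{op}}^{\,3}}{\det F}\,,\qquad
	\Ihat_2(F) \;=\; \frac{(\lambda_1\lambda_2)^3}{(\det F)^2} \;=\; \frac{\norm{\Cof F}_{\mathrm{op}}^{\,3}}{(\det F)^2}\,,
\]
where $\norm{\,\cdot\,}_{\mathrm{op}}$ denotes the spectral (operator) norm. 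The first identity uses $\lambda_1=\norm{F}_{\mathrm{op}}$ and $\lambda_2\lambda_3=\det F/\lambda_1$; the second uses that the singular values of $\Cof F$ are $\{\lambda_2\lambda_3,\lambda_1\lambda_3,\lambda_1\lambda_2\}$, so the ordering $\lambda_1\geq\lambda_2\geq\lambda_3$ gives $\lambda_1\lambda_2=\norm{\Cof F}_{\mathrm{op}}$.

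With these representations in hand, polyconvexity reduces to the convexity on $\Rnn\times\R$ of the two auxiliary functions
\[
	P_1(A,s) \;=\; \begin{cases}\norm{A}_{\mathrm{op}}^{\,3}/s & s>0,\\[2pt] +\infty & s\leq 0,\end{cases}
	\qquad
	P_2(B,s) \;=\; \begin{cases}\norm{B}_{\mathrm{op}}^{\,3}/s^2 & s>0,\\[2pt] +\infty & s\leq 0,\end{cases}
\]
since $\Ihat_1(F)=P_1(F,\det F)$ and $\Ihat_2(F)=P_2(\Cof F,\det F)$, and a function not depending on some of the minors is trivially convex in those slots. I would establish the convexity of $P_1$ and $P_2$ by the standard composition argument: first check that the scalar functions $(y,s)\mapsto y^3/s$ and $(y,s)\mapsto y^3/s^2$ are jointly convex and non-decreasing in $y$ on $(0,\infty)^2$ (a one-line $2\times2$ Hessian computation shows the Hessians are positive semidefinite); then observe that $A\mapsto\norm{A}_{\mathrm{op}}$ is a norm, hence non-negative and convex. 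Combining the triangle inequality, monotonicity of $y\mapsto y^3/s$ in $y$ for $s>0$, and joint convexity in $(y,s)$ gives joint convexity of $P_1$ and $P_2$ on $\Rnn\times(0,\infty)$, and the extension by $+\infty$ for $s\leq 0$ preserves convexity and produces exactly the kind of extended-real-valued $P$ required by Definition \ref{definition:polyconvexityGLSL}.

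The only mild obstacle is conceptual: the naive formulas $\lambda_1^2/(\lambda_2\lambda_3)$ and $\lambda_1\lambda_2/\lambda_3^{\,2}$ involve individual singular values, which are neither polynomials nor convex functions of the minors in isolation. The trick is that after multiplying through by appropriate powers of $\lambda_1\lambda_2\lambda_3=\det F$, the numerators collapse into the largest singular value of $F$ or of $\Cof F$ — quantities that are convex (as operator norms) in $F$ and $\Cof F$ respectively. Everything after this rewriting is a routine verification.
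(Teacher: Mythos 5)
Your proof is correct, and for $\Ihat_2$ it takes a genuinely different route from the paper's. For $\Ihat_3=\det F$ and for $\Ihat_1=\norm{F}_{\mathrm{op}}^3/\det F$ you and the paper use the same representations; the paper, however, merely cites an adaptation of an earlier lemma for the polyconvexity of $\Ihat_1$, whereas you supply the Hessian-plus-composition argument in full, which makes your version self-contained. The real divergence is $\Ihat_2$: the paper writes $\Ihat_2(F)=(\det F)\cdot\norm{F\inv}_2^3$, recognizes this as the Shield transformation $w^*$ of the polyconvex map $w(F)=\norm{F}_2^3$, and invokes the general theorem (cited from \v{S}ilhav\'y) that Shield transformations preserve polyconvexity. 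You instead use $\lambda_1\lambda_2=\norm{\Cof F}_{\mathrm{op}}$ to write $\Ihat_2(F)=\norm{\Cof F}_{\mathrm{op}}^3/(\det F)^2$ and verify the joint convexity of the perspective-type function $(y,s)\mapsto y^3/s^2$ directly. The two are of course consistent --- unwinding the Shield-transformation theorem via $F^{-1}=(\Cof F)^T/\det F$ reproduces essentially your computation --- but your version is elementary and exhibits the convex function of the minors explicitly, at the cost of redoing by hand what the cited general results give for free. All the individual steps check out: the Hessians of $y^3/s$ and $y^3/s^2$ are positive semidefinite on $(0,\infty)^2$, both functions are non-decreasing in $y$ there, the singular values of $\Cof F$ are the pairwise products $\lambda_i\lambda_j$ so that the largest is $\lambda_1\lambda_2$, and the extension by $+\infty$ on $\{s\leq 0\}$ is convex because the effective domain $\Rnn\times(0,\infty)$ is convex, which is exactly what Definition \ref{definition:polyconvexityGLSL} requires.
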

\begin{proof}
	We use the representations
	\begin{align}
	\widehat{I}_1(F) &= \frac{\lambda_1^2}{\lambda_2\.\lambda_3} = \frac{\lambda_1^3}{\lambda_1\.\lambda_2\.\lambda_3} = \frac{\norm{F}_2^3}{\det F}\,,\notag\\
	\widehat{I}_2(F) &= \frac{\lambda_1\.\lambda_2}{\lambda_3^2} = \lambda_1\.\lambda_2\.\lambda_3 \cdot \left(\frac{1}{\lambda_3}\right)^3 = (\det F)\cdot \norm{F\inv}_2^3 = w^*(F)\,,\\
	\widehat{I}_3(F) &= \lambda_1\.\lambda_2\.\lambda_3 = \det F\,,\notag
	\end{align}
	where $w^*$ denotes the \emph{Shield transformation}\footnote{The Shield transformation of $W^*$ of $W\col\GLpn\to\R^n$ is given by $W^*(F)=(\det F)\cdot W(F\inv)$.} \cite{shield1967inverse} of the function $w\col\GLp(3)\to\mathbb{R}$ with $w(F)=\norm{F}_2^3$. 
	
	The polyconvexity of $\widehat{I}_3$ is obvious, the proof of the polyconvexity of $\widehat{I}_1$ can be adapted from \cite[Lemma 3.2]{agn_ghiba2015exponentiated} and $\widehat{I}_2$ is polyconvex as the Shield transformation of the polyconvex mapping $w$ \cite{silhavy1997mechanics}.
\end{proof}

\begin{proposition}
\label{prop:isochoricTCsymmetricEnergy}
	The energy function $W\colon\GLp(3)\to\mathbb{R}$ with
	\[
	W(F) = \Ihat_1(F) + \Ihat_2(F) = \frac{\lambda_1^2}{\lambda_2\.\lambda_3} + \frac{\lambda_1\.\lambda_2}{\lambda_3^2}
	\]
	for all $F\in\GLp(3)$ with ordered singular values $\lambda_1\geq\lambda_2\geq\lambda_3$ is isochoric, tension-compression symmetric and polyconvex.
\end{proposition}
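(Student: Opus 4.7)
The plan is to verify the three claimed properties in turn, relying entirely on facts already established in the preceding discussion and in Lemma \ref{lemma:multiplicativeInvariantsPolyconvexity}. The statement is essentially a corollary of that lemma together with the identities worked out just before it, so I expect the proof to be very short.

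For isochoricity, I would note that scaling $F$ by $a>0$ multiplies every singular value by $a$, leaving the degree-zero ratios $\widehat{I}_1(F)=\lambda_1^2/(\lambda_2\lambda_3)$ and $\widehat{I}_2(F)=\lambda_1\lambda_2/\lambda_3^2$ unchanged. Hence $W(a\cdot F)=\widehat{I}_1(F)+\widehat{I}_2(F)=W(F)$, as already remarked after \eqref{eq:isochoricInvariants}. For tension-compression symmetry, I would invoke the identities $\widehat{I}_1(F^{-1})=\widehat{I}_2(F)$ and $\widehat{I}_2(F^{-1})=\widehat{I}_1(F)$ derived earlier (from the fact that the largest singular value of $F^{-1}$ is $1/\lambda_3$), which immediately give
\[
W(F^{-1}) = \widehat{I}_1(F^{-1}) + \widehat{I}_2(F^{-1}) = \widehat{I}_2(F) + \widehat{I}_1(F) = W(F).
\]

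For polyconvexity, Lemma \ref{lemma:multiplicativeInvariantsPolyconvexity} already supplies convex functions $P_1,P_2$ on the space of minors such that $\widehat{I}_i(F)=P_i(\mathbb{M}(F))$ for $i=1,2$. Setting $P\colonequals P_1+P_2$, which is convex as a sum of convex functions, yields $W(F)=P(\mathbb{M}(F))$, so $W$ is polyconvex in the sense of Definition \ref{definition:polyconvexityGLSL}. (The trivial extension by $+\infty$ outside $\GLp(3)$ needed in part i) of that definition is harmless since $P$ is already finite-valued on $\mathbb{M}(\Rnn)$.)

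There is no real obstacle here: all the substantive work — the polyconvexity of $\widehat{I}_1$ (adapted from \cite[Lemma 3.2]{agn_ghiba2015exponentiated}) and of $\widehat{I}_2$ (via the Shield transformation of $F\mapsto\norm{F}_2^3$) — has been carried out in Lemma \ref{lemma:multiplicativeInvariantsPolyconvexity}, and isochoricity and tension-compression symmetry are immediate from the explicit formulas. The proposition should therefore be packaged as a direct consequence of these previously established facts.
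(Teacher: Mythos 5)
Your proposal is correct and follows essentially the same route as the paper's own proof: isochoricity and tension-compression symmetry from the identities established right after \eqref{eq:isochoricInvariants}, and polyconvexity as a direct consequence of Lemma \ref{lemma:multiplicativeInvariantsPolyconvexity} via closure of polyconvexity under sums. Your added remarks on the sum $P_1+P_2$ and the extension by $+\infty$ merely spell out what the paper leaves implicit.
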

\begin{proof}
	The polyconvexity of $W$ follows directly from the polyconvexity of $\Ihat_1$ and $\Ihat_2$, see Lemma \ref{lemma:multiplicativeInvariantsPolyconvexity}. Similarly, $W$ is isochoric as the sum of the isochoric functions $\Ihat_1,\Ihat_2$. Furthermore,
	\[
	W(F\inv) = \Ihat_1(F\inv) + \Ihat_2(F\inv) = \Ihat_2(F) + \Ihat_1(F) = W(F)\,,
	\]
	thus $W$ is tension-compression symmetric as well.
\end{proof}

Since polyconvexity implies rank-one convexity, the energy function $W$ given in Proposition \ref{prop:isochoricTCsymmetricEnergy} is an example of a rank-one convex energy on $\GLp(3)$ which is isochoric and tension-compression symmetric.
However, $W$ cannot be expressed as a function of $\norm{\dev_3 \log U}^2$, since for $U_1,U_2$ as defined in \eqref{eq:isochoricRepresentationCounterexamples}, we find
\[
	W(U_1) = \frac{e^2}{e\cdot e^{-2}} + \frac{e\cdot e}{(e^{-2})^2} = e^3+e^6 \;\neq\; 2\.e^{3\sqrt{3}} = \frac{(e^{\sqrt{3}})^2}{1\cdot e^{-\sqrt{3}}} + \frac{e^{\sqrt{3}}\cdot1}{(e^{-\sqrt{3}})^2} = W(U_2)\,. \qedhere
\]

\section{The volumetric-isochoric split}\label{volisosect}

A function $W$ on $\GLpn$ is called \emph{volumetric-isochorically split} if it is of the form
\begin{align}
	W=\Wiso(F)+\Wvol(\det F)
\end{align}
with a function $\Wvol\col[0,\infty)\to\R$ and an objective, isotropic function $\Wiso\col\GLpz\to \mathbb{R}$ which is additionally isochoric, i.e.\ satisfies $\Wiso(a\,F)=\Wiso(F)$ for all $F\in\GLpz$ and all $a>0$.

In the two-dimensional case, every isochoric-volumetrically decoupled energy $W\col\GLp(2)\to \mathbb{R}_+$ can be written in the form \cite{agn_martin2015rank}
\begin{align}
\label{eq:isoVolDecoupled}
	W(F)=\Psi(\norm{\dev_2\log U}^2))+\Wvol(\det F)
\end{align}
with some function $\Psi\col\mathbb{R}_+\to \mathbb{R}_+$. According to Proposition 	\ref{prop:mainResultInTermsOfLogSquared} (cf.\ \cite[page 213]{Dacorogna08}), an energy $W$ of the form \eqref{eq:isoVolDecoupled} is rank-one convex for any convex function $\Wvol$ and any function $\Psi$ satisfying the inequality $2\,\eta\,\Psi^{\prime\prime}(\eta)+ (1-\sqrt{2\,\eta})\,\Psi^{\prime}(\eta)\geq 0$ for all $\eta\in(0,\infty)$. Hence, all the functions $W_{_{\rm eH}}\col\GLp(2)\to \R$ from the family of exponentiated Hencky type energies \cite{agn_neff2015exponentiatedI,agn_schroeder2017exponentiated,agn_nedjar2017finite}
 \begin{align}
 W_{_{\rm eH}}(F)=
 \dd\frac{\mu}{k}\,e^{k\,\norm{{\rm dev}_n\log U}^2}+\frac{\kappa}{2\widehat{k}}\,e^{\widehat{k}\,[(\log \det U)]^2}
 \end{align}
 are {rank-one convex} for $\mu>0, \kappa>0$, $k\geq\dd\frac{1}{4}$ and $\widehat{k}\dd\geq \tel8$.

In the three-dimensional case, not every objective, isotropic and isochoric energy function may be written as function of $\norm{\dev_3\log V}^2$. It is also known that there exist volumetric-isochorically decoupled energies which are rank-one convex, see for example Section \ref{section:isochoricTCsymmetricEnergies}.
However, we show that 
\begin{theorem}
	For $n\geq3$, there do not exist two-times continuously differentiable functions $\Psi\col[0,\infty)\to\mathbb{R}$ and $\Wvol\col[0,\infty)\to \mathbb{R}$ with $\Psi'(0)>0$ such that
	\begin{align}
	W(F)=\Psi(\norm{\dev_n\log V}^2)+\Wvol(\det F)
	\end{align}
	is rank-one convex on $\GLp(n)$.
\end{theorem}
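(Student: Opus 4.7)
The plan is to reduce the theorem to an impossibility-type argument in the spirit of Lemma~\ref{lemma:impossibility}, using the key algebraic fact that $\det(F+tH)$ is linear in $t$ whenever $H$ has rank one. This immediately yields $D^2[\det F](F).(H,H) = 0$ for any rank-one $H$, so for $W(F) = \Psi(\omega(F)) + \Wvol(\det F)$ (with $\omega(F) = \norm{\dev_n \log V}^2$) the chain rule gives, for any rank-one $H=\xi\otimes\eta$,
\begin{equation*}
D^2 W(F).(H,H) \;=\; \Psi''(\omega)\,[D\omega.H]^2 \,+\, \Psi'(\omega)\,D^2\omega.(H,H) \,+\, \Wvol''(\det F)\,[D\det(F).H]^2.
\end{equation*}
Crucially, if $H$ additionally preserves the determinant, i.e.\ $D\det(F).H = (\det F)\,\xi^T F^{-T}\eta = 0$, the volumetric contribution drops out regardless of $\Wvol$; and if in addition $D\omega(F).H = 0$, the expression reduces to $\Psi'(\omega(F))\cdot D^2\omega(F).(H,H)$.

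The proof is thus reduced to exhibiting, for any admissible $\Psi,\Wvol$, some $F \in \GLp(n)$ and $\xi,\eta \in \R^n\setminus\{0\}$ satisfying
\begin{itemize}
\item[(A)] $\xi^T F^{-T}\eta = 0$ (isochoric rank-one direction);
\item[(B)] $\langle(\dev_n\log V)\xi, F^{-T}\eta\rangle = 0$ (first-order criticality of $\omega$);
\item[(C)] $D^2\omega(F).(\xi\otimes\eta,\xi\otimes\eta) < 0$;
\end{itemize}
together with $\Psi'(\omega(F)) > 0$. The idea for the construction is to adapt the proof of Proposition~\ref{notrankonedev}: for the fixed $F$ and $\xi$ from that proof, the one-parameter family $\eta_\theta = F^T Q(\vartheta,\theta)\,J\vartheta$, $\theta\in[0,2\pi)$, satisfies (B) for every $\theta$. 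A direct check shows that the particular angle $\theta=\pi/2$ used in Proposition~\ref{notrankonedev} violates (A), since $\xi^T F^{-T}\eta_{\pi/2} = -\tfrac{15}{29\sqrt{2}}\neq 0$. I would instead impose (A) as an additional scalar equation on $\theta$, obtain a candidate angle $\theta^*$, and then reproduce the explicit computation of $h''(0)$ from Proposition~\ref{notrankonedev} to verify (C) at $\theta^*$, perturbing the diagonal entries of $F$ and the components of $\xi$ within an open set if necessary (the sign of $h''(0)$ depends continuously on the parameters).

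The remaining ingredient, the positivity $\Psi'(\omega(F^*)) > 0$, I would supply via an auxiliary strict-monotonicity result: rank-one convexity of the full $W$, combined with $\Psi'(0)>0$, forces $\Psi'(s) > 0$ for all $s \geq 0$. Its proof would adapt the reasoning behind Lemma~\ref{lemma:strictMonotonicityConditionDev} (in turn based on the Sendova--Walton inequality) by restricting attention to rank-one perturbations tangent to the isochoric submanifold --- along which $\Wvol$ is constant --- and deriving the differential inequality $\Psi''(s) \geq \tfrac{3}{16}\Psi'(s)$, which together with $\Psi'(0)>0$ yields $\Psi' > 0$ throughout $[0,\infty)$ by a standard Gr\"onwall-type argument.

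The hard part will be the verification of (C) under the additional constraint (A). Imposing (A) cuts the two-parameter family of admissible rank-one directions (solving (B)) down to a finite set, and it is not a priori clear that the sign of $h''(0)$ can be kept negative at the replacement angle $\theta^*$; the explicit computation of $\mu_i(t)$ and its logarithmic spread is appreciably more involved than in Proposition~\ref{notrankonedev}. Should the specific $F, \xi$ from that proof fail to yield a valid $\theta^*$, a fresh choice of base pair $(F,\xi)$ is required, and the dimension count (three real conditions (A)--(C) within the six-dimensional rank-one manifold) suggests that such a pair exists generically, but the verification is the crux of the argument.
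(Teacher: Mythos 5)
Your overall strategy coincides with the paper's: exploit $\Cof(\xi\otimes\eta)=0$ so that the volumetric part contributes only through $\Wvol''\cdot[D\det(F).H]^2$, then kill both first-order terms by choosing a rank-one direction satisfying your conditions (A) and (B) simultaneously, so that rank-one convexity forces $\Psi'(\omega(F))\cdot D^2\omega(F).(\xi\otimes\eta,\xi\otimes\eta)\geq 0$, and finally contradict this by making $D^2\omega<0$. The paper takes exactly this route. It also short-circuits your search for the angle $\theta^*$: the direction satisfying (A) and (B) at once is simply
\[
	\eta=F^{T}\,\bigl\{\xi\times[(\dev_3\log V)\,\xi]\bigr\}\,,
\]
since the cross product is orthogonal both to $\xi$ and to $(\dev_3\log V)\,\xi$; this is precisely the point your circle $\{\eta_\theta\}$ meets the plane $\xi^\perp$, so your $\theta^*$ exists and the two constructions agree up to normalization.

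The genuine gap is that you do not verify condition (C), and you acknowledge as much. The dimension count (\enquote{such a pair exists generically}) does not help here: conditions (A) and (B) pin the direction $F^{-T}\eta$ down to a single line, and the entire content of the counterexample is the \emph{sign} of $h''(0)$ on exactly that line, which no transversality argument can supply. Without an explicit $(F,\xi)$ for which $h''(0)<0$ is actually computed, the theorem is not proved. The paper closes this gap by a concrete computation: with $F_0=\diag(1,e^{20},e^{10})$, $\xi=(\tfrac{\sqrt3}{2}\sin\alpha,\tfrac{\sqrt3}{2}\cos\alpha,\tfrac12)^T$ and the cross-product $\eta$ above, it finds
\[
	h''(0)=\frac{75\left(e^{40}(319-185\sqrt3)+140\,e^{20}+185\sqrt3+301\right)}{16\,(e^{40}-1)}\approx-6.70<0\,,
\]
so your \enquote{hard part} does work out, but only after the computation is actually carried through (note the paper also switches to a different base point $F_0$ than in Proposition \ref{notrankonedev}, so reusing that proposition's data is not guaranteed to succeed). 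Your second concern --- ensuring $\Psi'(\omega(F_0))>0$ from the hypothesis $\Psi'(0)>0$ --- is legitimate; the paper leaves this step implicit, and your plan of re-running the Sendova--Walton argument along determinant-preserving rank-one directions (for which $\Wvol$ drops out of the second variation entirely) is a sensible way to justify it, though it too remains a sketch rather than a proof.
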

\begin{remark}
	According to Remark \ref{remark:strictMonotonicityConditionLinearCompatibilityDev}, the condition $\Psi'(0)>0$ can be replaced by the requirement that $W$ is compatible with linear elasticity in the three-dimensional case.
\end{remark}
\begin{proof}
We prove the result only for $n=3$. First, we compute the second derivative for the volumetric part.
Since
\begin{align}
\det(F+H)&=
\det F+\langle H,\Cof F\rangle +\langle \Cof H,F\rangle+\det H
,\notag
\end{align}
we find
\begin{align}
D_F(\det F).H= \langle H,\Cof F\rangle = \det F\, \langle F^{-1}H,\id\rangle=\det F\, \tr(F^{-1}H)
\end{align}
as well as
\begin{align}
D_F^2(\det F).(H,H)=2\, \langle \Cof H,F\rangle \,.
\end{align}
With $H=\xi\otimes \eta$, using that $\Cof (\xi\otimes \eta)=0$, we thus find
\begin{align}
D_F^2\Wvol(\det F).(H,H)&=\Wvol^{\prime\prime}(\det F)\, [D_F(\det F).H]^2+\Wvol^{\prime}(\det F) \,\smash{\underbrace{D_F^2(\det F).(H,H)}_{=0}}\notag\\
&=\Wvol^{\prime\prime} (\det F)\,[\det F\, \langle F^{-1}H,\id\rangle]^2.
\end{align}
Regarding the isochoric part, we recall that
\begin{align}
D_F \Wiso(F).\xi\otimes\eta&=\Psi'(\norm{\dev_3\log V}^2)\cdot D_F(\norm{\dev_3\log V}^2).\xi\otimes\eta,\notag\\
D_F^2 \Wiso(F).(\xi\otimes\eta,\xi\otimes\eta)&=\Psi''(\norm{\dev_3\log V}^2)\cdot [D_F(\norm{\dev_3\log V}^2).\xi\otimes\eta]^2\\
&\qquad+\Psi'(\norm{\dev_3\log V}^2)\cdot D_F^2(\norm{\dev_3\log V}^2).(\xi\otimes\eta,\xi\otimes\eta).\notag
\end{align}
Therefore, the rank-one convexity condition for the total energy $W$ reads
\begin{align}
	0 &\leq \Psi''(\norm{\dev_3\log V}^2)\cdot [D_F(\norm{\dev_3\log V}^2).\xi\otimes\eta]^2\\
	&\quad+\Psi'(\norm{\dev_3\log V}^2)\cdot D_F^2(\norm{\dev_3\log V}^2).(\xi\otimes\eta,\xi\otimes\eta) + \Wvol^{\prime\prime}(\det F)\, \,(\det F)^2\, \langle F^{-1}(\xi\otimes\eta) ,\id\rangle^2\notag
\end{align}
or, equivalently (cf.\ \eqref{eq:firstDerivativeDevLog}),
\begin{align}\label{condalt}
	0 &\leq 4\,\Psi''(\norm{\dev_3\log V}^2)\cdot \langle (\dev_3\log V)\, \xi, F^{-T}\, \eta\rangle^2\notag\\
	&\quad+\Psi'(\norm{\dev_3\log V}^2)\cdot D_F^2(\norm{\dev_3\log V}^2).(\xi\otimes\eta,\xi\otimes\eta) + \Wvol^{\prime\prime}(\det F)\, \,(\det F)^2\, \langle \xi ,F^{-T}\eta\rangle^2\,.
\end{align}
In the following, we choose
\begin{align}
F_0=\begin{pmatrix}
1&0&0\\
0&e^{20}&0\\
0&0& e^{10}
\end{pmatrix},
\end{align}
and
\begin{align}
\xi=\begin{pmatrix}
\frac{\sqrt{3}}{2} \sin (\alpha ) \\
\frac{\sqrt{3}}{2} \cos (\alpha ) \\
\frac{1}{2} \\
\end{pmatrix},\qquad \eta=F_0^{T}\, \{\xi\times [(\dev_3\log V_0)\,\xi]\},
\end{align}
where $V_0=\sqrt{F_0\, F_0^T}=F_0$. Then obviously
\begin{align}\label{choice1}
\langle \xi ,F_0^{-T}\eta\rangle=0\qquad \text{and} \qquad \langle (\dev_3\log V_0)\, \xi, F_0^{-T}\, \eta\rangle=0.
\end{align}

Assume now that the energy $W$ is rank-one convex- Then, due to \eqref{condalt} and \eqref{choice1},
\begin{align}\label{condnou}
\Psi'(\norm{\dev_3\log V_0}^2)\cdot D_F^2(\norm{\dev_3\log V_0}^2)\Big|_{F=F_0}.(\xi\otimes\eta,\xi\otimes\eta)\geq 0\,. %
\end{align}
We will show that this inequality is violated. Again, we use the equality
\begin{align}
\hspace{-0.5cm}h''(0)&=
\frac{1}{4} D^2_F (\norm{\dev_3\log { F\,F^T}}^2)\Big|_{F=F_0}. (\xi \otimes \eta,\xi \otimes \eta),
\end{align}
where 
\begin{align}
h(t)&=\norm{\dev_3\log \sqrt{(F_0+t\,\xi\otimes\eta)(F_0+t\,\xi\otimes\eta)^T}}^2
=\frac{1}{12}\left[\log^2 \frac{\mu_1(t)}{\mu_2(t)}+\log^2 \frac{\mu_2(t)}{\mu_3(t)}+\log^2 \frac{\mu_3(t)}{\mu_1(t)}\right]\notag
\end{align}
for $t\in(-\eps,\eps)$, $\eps$ sufficiently small, and $\mu_i(t)$, $i=1,2,3$, denote the eigenvalues of $(F_0+t\,\xi\otimes\eta)(F_0+t\,\xi\otimes\eta)^T$.
\begin{figure}[h!]
	\begin{minipage}[h]{1\linewidth}
		\centering
		\includegraphics[scale=0.7]{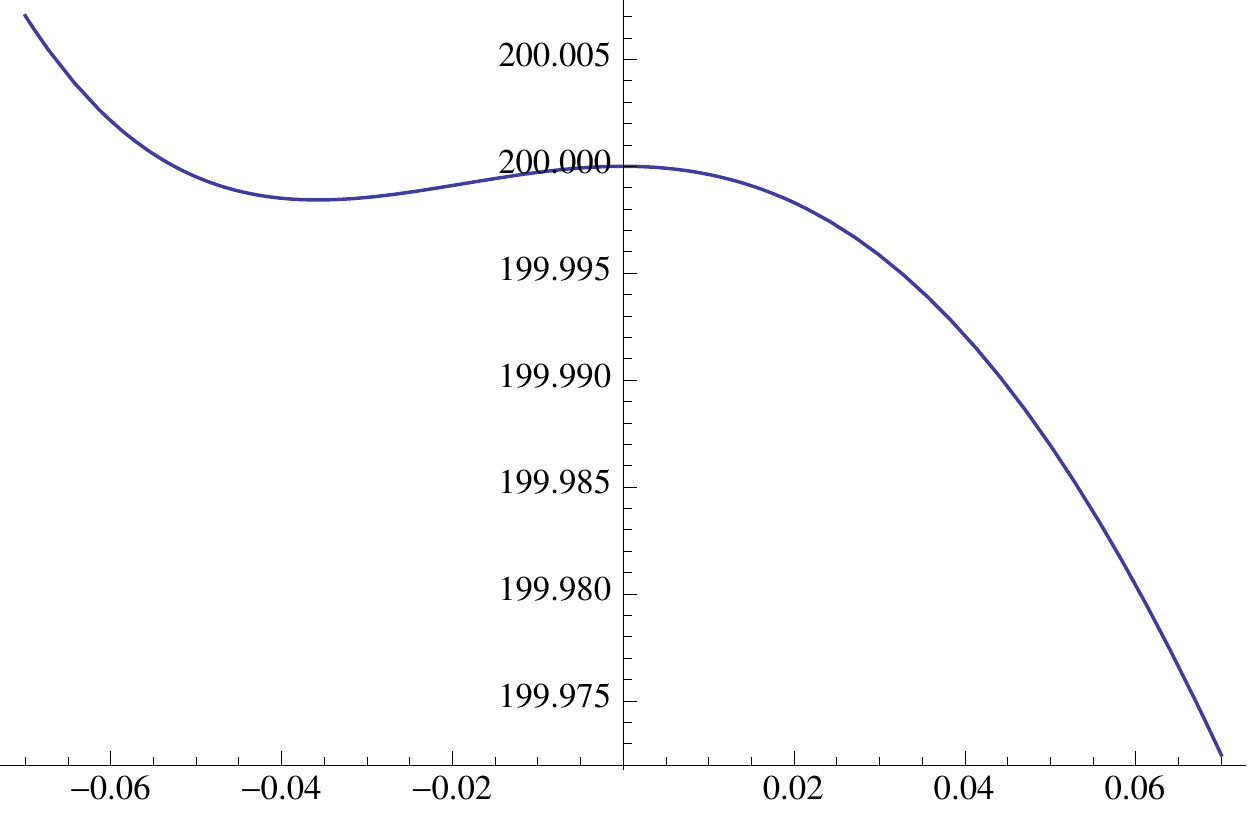} \hspace*{1cm}	
		\centering
		\caption{\footnotesize{The graph of the function $h$. At the critical point $t=0$, the function $h$ is concave, i.e.\ $h''(0)<0$, thus the rank-one convexity condition in \eqref{condalt} is violated.} }\label{grafvoliso3}
	\end{minipage}
\end{figure}

\noindent
After some lengthy computation, %
we find
\begin{align}
	h''(0)=\frac{75 \left(e^{40} \left(319-185 \sqrt{3}\right)+140 e^{20}+185 \sqrt{3}+301\right)}{16 \left(e^{40}-1\right)}\approx -6.70031<0\,,\notag
\end{align}
completing the proof; the graph of the mapping $t\mapsto h(t)$ is also shown in Fig.\ \ref{grafvoliso3}.
\end{proof}

\section*{Acknowledgment}
The work of Ionel-Dumitrel Ghiba was supported by a grant of the Romanian National Authority for Scientific Research and Innovation, CNCS-UEFISCDI, project number PN-III-P1-1.1-TE-2016-2314.

\begin{footnotesize}
	\printbibliography[heading=bibnumbered]
\end{footnotesize}

\appendix
\section{The derivative of $\norm{\log U}^2$}
\label{appendix:logDerivative}
There are multiple ways of computing the derivative of the mapping $F\mapsto \norm{\log U}^2$. Here, we discuss two ways of obtaining the derivative: a general formula for the trace of so-called \emph{primary matrix functions} and a method used by Vall\'ee \cite{vallee1978,vallee2008dual} (and indicated earlier by Richter \cite{richter1948}) for computing the Kirchhoff stress tensor corresponding to a hyperelastic material.

For any function $f\col\R^+\to\R$, we also denote by $f$ the corresponding \emph{primary matrix function}, which is uniquely defined by the equality
\begin{equation}
	f\big(Q^T\diag(\lambda_1,\dotsc,\lambda_n)\,Q\big) = Q^T\diag(f(\lambda_1),\dotsc,f(\lambda_n))\,Q
\end{equation}
for all $\lambda_i>0$ and all $Q\in\On$. If $f\col\R^+\to\R$ is differentiable, then \cite{agn_martin2015some}
\[
	D_S (\tr f(S)) = f'(S)\,,
\]
where $f'$ is interpreted as the primary matrix function corresponding to $f'\col\R^+\to\R$. In particular, since $\frac{d}{dt} \log^2(t) = \frac{2\.\log{t}}{t} \equalscolon w(t)$, we find
\begin{align*}
	D_B (\norm{\log B}^2) = D_B \tr(\log^2(B)) = w(B) = 2\.\log(B)\.B\inv = 2\.B\inv\.\log(B)\,.
\end{align*}

We can obtain the same result by applying Vall\'ee's general formula \cite{vallee2008dual}
\[
	D_X \Phi(\exp(X)).\Htilde = \iprod{(D\Phi)(\exp(X)),\, \exp(X)\cdot \Htilde}\,,
\]
which holds for any continuously differentiable isotropic function $\Phi\col\PSymn\to\R$, to the special case $\Phi(X)=\norm{\log X}^2$, which yields
\begin{equation}\label{eq:valleeFormula}
	\iprod{2\.X,\Htilde} = D_X (\norm{X}^2).\Htilde = D_X \Phi(\exp(X)).\Htilde = \iprod{(D\Phi)(\exp(X)),\, (\exp(X)\cdot \Htilde)}
\end{equation}
and thus, with $X=\log B$ and $\Htilde=B\inv H$,
\[
	\iprod{D_{\log B}\norm{\log B}^2,\, H} = \iprod{(D\Phi)(\log B),\, \exp(\log B) \cdot B\inv H} \overset{\eqref{eq:valleeFormula}}= \iprod{2\log (B)\.B\inv ,H} = \iprod{2B\inv\log (B) ,H}\,.
\]
We can now directly obtain the derivative with respect to $F$:
\begin{align*}
	\big( D_F \norm{\log U}^2 \big).H &= \frac14\, \big( D_F \norm{\log B}^2 \big).H\\
	&= \frac14\, \iprod{D_B(\norm{\log(B)}^2,\, D_F (B).H}\\
	&= \frac12\, \iprod{\log(B)\.B\inv,\, D_F (FF^T).H}\\
	&= \frac12\, \iprod{\log(B)\.B\inv,\, FH^T+HF^T}\\
	&= \frac12\, \iprod{B\inv\.\log(B),\, FH^T} + \frac12\, \iprod{\log(B)\.B\inv,\, HF^T}\\
	&= \iprod{F^{-T}F\inv\.\log(V),\, FH^T} + \iprod{\log(V)\.F^{-T}F\inv,\, HF^T}\\
	&= \iprod{F\inv\.\log(V),\, H^T} + \iprod{\log(V)\.F^{-T},\, H} = 2\.\iprod{\log(V)\.F^{-T},\, H}\,.
\end{align*}

\end{document}